\newtheorem{intthm}{Theorem}[]
\newtheorem*{intque*}{Question}
\newtheorem*{intexa*}{Example}
\newcommand{\numberseries}{\bfseries}   
\newlength{\thmtopspace}                
\newlength{\thmbotspace}                
\newlength{\thmheadspace}               
\newlength{\thmindent}                  
\newtheoremstyle{bfupright head,slanted body}
                {\thmtopspace}{\thmbotspace}
                {\slshape}{\thmindent}{\bfseries}{.}{\thmheadspace}
                {{\numberseries \thmnumber{#2\;}}\thmnote{#3}}
\newtheoremstyle{bfupright head,upright body}
                {\thmtopspace}{\thmbotspace}
                {\upshape}{\thmindent}{\bfseries}{.}{\thmheadspace}
                {{\numberseries \thmnumber{#2\;}}\thmnote{#3}}
\newtheoremstyle{fixed bf head,slanted body}
                {\thmtopspace}{\thmbotspace}{\slshape}
                {\thmindent}{\bfseries}{.}{\thmheadspace}
                {{\numberseries \thmnumber{#2\;}}\thmname{#1}\thmnote{ (#3)}}
\newtheoremstyle{fixed bf head,upright body}
                {\thmtopspace}{\thmbotspace}{\upshape}
                {\thmindent}{\bfseries}{.}{\thmheadspace}
                {{\numberseries \thmnumber{#2\;}}\thmname{#1}\thmnote{ (#3)}}
\newtheoremstyle{numbered paragraph}
                {\thmtopspace}{\thmbotspace}{\upshape}
                {\thmindent}{\upshape}{}{\thmheadspace}
                {{\numberseries \thmnumber{#2.}}}
\theoremstyle{bfupright head,slanted body}
\newtheorem{res}{}[section]             \newtheorem*{res*}{}
\theoremstyle{bfupright head,upright body}
\newtheorem{bfhpg}[res]{}               \newtheorem*{bfhpg*}{}
\theoremstyle{fixed bf head,slanted body}
\newtheorem{thm}[res]{Theorem}          \newtheorem*{thm*}{Theorem}
\newtheorem{prp}[res]{Proposition}      \newtheorem*{prp*}{Proposition}
        \newtheorem*{cor*}{Corollary}
\newtheorem{lem}[res]{Lemma}            \newtheorem*{lem*}{Lemma}
\theoremstyle{fixed bf head,upright body}
\newtheorem{dfn}[res]{Definition}       \newtheorem*{dfn*}{Definition}
\newtheorem{rmk}[res]{Remark}           \newtheorem*{rmk*}{Remark}
\newtheorem{exa}[res]{Example}           \newtheorem*{exa*}{Example}
           \newtheorem*{que*}{Question}
           \newtheorem*{fact*}{Fact}
           \newtheorem*{nota*}{Notation}
           \newtheorem*{setup*}{Setup}
\theoremstyle{numbered paragraph}
\newlength{\thmlistleft}        
\newlength{\thmlistright}       
\newlength{\thmlistpartopsep}   
\newlength{\thmlisttopsep}      
\newlength{\thmlistparsep}      
\newlength{\thmlistitemsep}     
\newcounter{eqc}
\newenvironment{eqc}{\begin{list}{\upshape (\textit{\roman{eqc}})}%
    {\usecounter{eqc}%
      \setlength{\leftmargin}{\thmlistleft}%
      \setlength{\labelwidth}{\thmlistleft}%
      \setlength{\rightmargin}{\thmlistright}%
      \setlength{\partopsep}{\thmlistpartopsep}%
      \setlength{\topsep}{\thmlisttopsep}%
      \setlength{\parsep}{\thmlistparsep}%
      \setlength{\itemsep}{\thmlistitemsep}}}%
  {\end{list}}%
\newcounter{prt}
\newenvironment{prt}{\begin{list}{\upshape (\alph{prt})}%
    {\usecounter{prt}%
      \setlength{\leftmargin}{\thmlistleft}%
      \setlength{\labelwidth}{\thmlistleft}%
      \setlength{\rightmargin}{\thmlistright}%
      \setlength{\partopsep}{\thmlistpartopsep}%
      \setlength{\topsep}{\thmlisttopsep}%
      \setlength{\parsep}{\thmlistparsep}%
      \setlength{\itemsep}{\thmlistitemsep}}}%
  {\end{list}}%
\newcounter{rqm}
  {\end{list}}%
\newenvironment{prf*}[1][Proof]{%
  \begin{proof}[\bf #1]
    \setcounter{equation}{0}
    }
  {\end{proof}
}
\newcommand{\pgref}[1]{\ref{#1}}
\renewcommand{\eqref}[1]{(\pgref{eq:#1})}
\newcommand{\eqclbl}[1]{{\upshape(\textit{#1})}}
\numberwithin{equation}{res}
\def\urltilda{\kern -.15em\lower .7ex\hbox{\~{}}\kern .04em}
\newcommand{\xra}[2][]{\xrightarrow[#1]{\:#2\:}}
\newcommand{\Ker}[1]{\nobreak{\operatorname{Ker}#1}}
\newcommand{\Coker}[1]{\nobreak{\operatorname{Coker}#1}}
\newcommand{\id}{\mathrm{id}}
\newcommand{\h}{\mathrm{H}}
\newcommand{\calC}{\mathcal{C}}
\newcommand{\calF}{\mathcal{F}}
\newcommand{\calW}{\mathcal{W}}
\newcommand{\sfA}{\mathsf{A}}
\newcommand{\sfC}{\mathsf{C}}
\newcommand{\sfE}{\mathsf{E}}
\newcommand{\sset}{\mathsf{sSet}}
\newcommand{\sfF}{\mathsf{F}}
\newcommand{\sfW}{\mathsf{W}}
\newcommand{\Mon}{\mathsf{Mon}}
\newcommand{\Epi}{\mathsf{Epi}}
\newcommand{\bcalC}{\widetilde{\mathcal{C}}}
\newcommand{\bcalF}{\widetilde{\mathcal{F}}}
\newcommand{\bsfC}{\widetilde{\mathsf{C}}}
\newcommand{\bsfF}{\widetilde{\mathsf{F}}}
\newcommand{\Set}{\mathsf{Set}}
\newcommand{\CH}{\mathsf{Ch}_{\geqslant0}(R)}
\begin{document}

\title[Compatible weak factorization systems and model structures]{Compatible weak factorization systems and model structures}

\author[Z.X. Di]{Zhenxing Di}
\address{Z.X. Di \ School of Mathematical Sciences, Huaqiao University, Quanzhou 362021, China}
\email{dizhenxing@163.com}

\author[L.P. Li]{Liping Li}
\address{L.P. Li \ Department of Mathematics, Hunan Normal University, Changsha 410081, China.}
\email{lipingli@hunnu.edu.cn}

\author[L. Liang]{Li Liang}
\address{L. Liang \ Department of Mathematics, Lanzhou Jiaotong University, Lanzhou 730070, China}
\email{lliangnju@gmail.com}
\urladdr{https://sites.google.com/site/lliangnju}

\thanks{Z.X. Di was partly supported by NSF of China (Grant No. 12471034),
the Scientific Research Funds of Huaqiao University (Grant No. 605-50Y22050) and
the Fujian Alliance Of Mathematics (Grant No. 2024SXLMMS04).
L.P. Li was partly supported by NSF of China (Grant No. 12171146).
L. Liang was partly supported by NSF of China (Grant No. 12271230). }

\date{\today}

\keywords{Weak factorization system, (abelian) model structure, cotorsion pair}

\subjclass[2010]{18G25, 18G55, 55U35}

\begin{abstract}
In this paper, the concept of compatible weak factorization systems in general categories is introduced as a counterpart of compatible complete cotorsion pairs in abelian categories. We describe a method to construct model structures on general categories via two compatible weak factorization systems satisfying certain conditions, and hence, generalize a very useful result by Gillespie for abelian model structures. As particular examples, we show that weak factorization systems associated to some classical model structures (for example, the Kan-Quillen model structure on $\sset$) satisfy these conditions.
\end{abstract}

\maketitle

\thispagestyle{empty}

\section*{Introduction}
\noindent
Throughout this paper, let $\sfE$ denote a category and $\sfA$ denote an abelian category.

To study various homotopy theories on $\sfE$ via a uniform and axiomatic approach, Quillen \cite{Qui67} introduced {\it model structures}, which are triples $(\calC, \calW, \calF)$ of classes of morphisms in $\sfE$ such that both $(\calC, \calW \cap \calF)$ and $(\calC \cap \calW, \calF)$ are weak factorization systems, and $\calW$ satisfies the 2-out-of-3 property. This notion plays a central role in modern homotopy theory, and people try to find various methods to construct model structures on $\sfE$.

For an abelian category $\sfA$, people are more interested in \emph{abelian model structures} which is compatible with the abelian structure of $\sfA$ (see Hovey \cite{Ho02}), that is, calibrations coincide with monomorphisms with cofibrant cokernels and fibrations coincide with epimorphisms with fibrant kernels. The celebrated Hovey's correspondence provides a bijective correspondence between abelian model structures on $\sfA$ and Hovey triples, that is, triples $(\sfC, \sfW, \sfF)$ of classes of objects in $\sfA$ such that both $(\sfC, \sfW \cap \sfF)$ and $(\sfC \cap \sfW, \sfF)$ are complete cotorsion pairs (see \ref{cotpair} for the definition of complete cotorsion pairs) and $\sfW$ is thick (i.e., $\sfW$ is closed under direct summands, and for each short exact sequence $0 \to M' \to M \to M'' \to 0$ in $\sfA$, the condition that two of the three objects $M'$, $M$ and $M''$ are in $\sfW$ implies that the third one is also in $\sfW$). Therefore, constructions of abelian model structures on $\sfA$ are equivalent to constructions of Hovey triples, which in general are simpler. Furthermore, by introducing compatible cotorsion pairs, Gillespie proved the following result, which provides a more convenient way to construct Hovey triples; see \cite[Theorem 1.1]{G15}. Recall from \cite{G15} that two complete cotorsion pairs $(\sfC, \bsfF)$ and $(\bsfC, \sfF)$ in $\sfA$ are called {\it compatible} if $\bsfC \subseteq \sfC$ (or equivalently,
$\bsfF \subseteq \sfF$) and $\bsfC \cap \sfF = \sfC \cap \bsfF$, and a cotorsion pair $(\sfC,\sfF)$ is called \textit{hereditary} if $\sfC$ is closed under kernels of epimorphisms and $\sfF$ is closed under cokernels of monomorphisms.

\begin{thm*}[Gillespie]
Let $(\sfC, \bsfF)$ and $(\bsfC, \sfF)$ be complete, hereditary and compatible cotorsion pairs in $\sfA$. Then there is a subcategory $\sfW$ in $\sfA$ such that $(\sfC, \sfW, \sfF)$ forms a Hovey triple. Moreover, $\sfW$ can be described as:
\begin{align*}
\sfW & = \{M \mid \text{there is a s.e.s.}\ 0 \to M \to A \to B \to 0 \text{ with } A \in \bsfF \text{ and } B \in \bsfC \}\\
& = \{ M \mid \text{there is a s.e.s.}\ 0 \to A' \to B' \to M \to 0 \text{ with } A' \in \bsfF \text{ and } B' \in \bsfC \}.
\end{align*}
\end{thm*}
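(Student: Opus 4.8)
Write $\sfW_1$ for the first class in the displayed formula (the objects $M$ admitting a short exact sequence $0\to M\to A\to B\to 0$ with $A\in\bsfF$ and $B\in\bsfC$) and $\sfW_2$ for the second. The plan is: \textbf{(i)}~prove $\sfW_1=\sfW_2$, and denote the common class by $\sfW$; \textbf{(ii)}~prove $\sfW\cap\sfF=\bsfF$ and $\sfC\cap\sfW=\bsfC$, so that the two weak factorization systems required of a Hovey triple are exactly the given cotorsion pairs; \textbf{(iii)}~prove that $\sfW$ is thick. Three elementary facts are used throughout: the instances with $A=M,\ B=0$ (resp. $A'=0,\ B'=M$) of the defining sequences show $\bsfF\subseteq\sfW_1$ (resp. $\bsfC\subseteq\sfW_2$); a termwise direct sum of two $\sfW_1$-presentations shows $\sfW_1$ is closed under finite direct sums; and every short exact sequence $0\to F\to N\to C\to 0$ with $F\in\bsfF$, $C\in\bsfC$ splits, by the orthogonality of $(\sfC,\bsfF)$ (equivalently $(\bsfC,\sfF)$), so such an $N$ is a direct summand of $F\oplus C$. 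Once (i) holds, these yield $\bsfF\cup\bsfC\subseteq\sfW$, closure of $\sfW$ under finite direct sums, and that every extension of a $\bsfC$-object by a $\bsfF$-object lies in $\sfW$.

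For (i), take $M\in\sfW_1$ via $0\to M\to A\to B\to 0$ and choose a special $\sfC$-precover $0\to G\to S\to A\to 0$ of $A$ for the complete cotorsion pair $(\sfC,\bsfF)$, so $S\in\sfC$ and $G\in\bsfF$. Since $\bsfF$ is closed under extensions, $S\in\bsfF$, hence $S\in\sfC\cap\bsfF=\bsfC\cap\sfF$ by compatibility; in particular $S\in\bsfC$. The pullback $Q$ of $S\to A\leftarrow M$ sits in short exact sequences $0\to G\to Q\to M\to 0$ and $0\to Q\to S\to B\to 0$; as $(\bsfC,\sfF)$ is hereditary, $\bsfC$ is closed under kernels of epimorphisms, so $Q\in\bsfC$ and the first sequence witnesses $M\in\sfW_2$. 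Dually, given $M\in\sfW_2$ via $0\to A'\to B'\to M\to 0$, choose a special $\sfF$-preenvelope $0\to B'\to T\to H\to 0$ for $(\bsfC,\sfF)$, so $T\in\sfF$ and $H\in\bsfC$; then $T\in\bsfC$ (extensions), hence $T\in\bsfC\cap\sfF=\sfC\cap\bsfF\subseteq\bsfF$; the pushout of $T\leftarrow B'\to M$ gives $0\to M\to R\to H\to 0$ and $0\to A'\to T\to R\to 0$, and heredity of $(\sfC,\bsfF)$ ($\bsfF$ closed under cokernels of monomorphisms) gives $R\in\bsfF$, so $M\in\sfW_1$.

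For (ii): if $M\in\sfW\cap\sfF$, pick $0\to M\to A\to B\to 0$ with $A\in\bsfF$, $B\in\bsfC$; since $B\in\bsfC$ and $M\in\sfF$, the orthogonality of $(\bsfC,\sfF)$ forces this sequence to split, so $M$ is a direct summand of $A$ and hence $M\in\bsfF$. With $\bsfF\subseteq\sfW\cap\sfF$ this gives $\sfW\cap\sfF=\bsfF$, and dually $\sfC\cap\sfW=\bsfC$. Consequently $(\sfC,\sfW\cap\sfF)=(\sfC,\bsfF)$ and $(\sfC\cap\sfW,\sfF)=(\bsfC,\sfF)$ are complete cotorsion pairs, hence weak factorization systems, by hypothesis; so it remains to prove (iii).

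For (iii), the engine is four one-sided closure statements, each to be proved by one or two pullbacks or pushouts together with the splitting fact above, closure of $\bsfF$, $\bsfC$ under extensions, and heredity of the cotorsion pairs:
\begin{prt}
\item $0\to F\to N\to N''\to 0$ with $F\in\bsfF$ and $N''\in\sfW$ forces $N\in\sfW$;
\item $0\to N'\to N\to C\to 0$ with $N'\in\sfW$ and $C\in\bsfC$ forces $N\in\sfW$;
\item $0\to F\to N\to N''\to 0$ with $F\in\bsfF$ and $N\in\sfW$ forces $N''\in\sfW$;
\item $0\to N'\to N\to C\to 0$ with $N\in\sfW$ and $C\in\bsfC$ forces $N'\in\sfW$.
\end{prt}
For (c), compose $F\hookrightarrow N\hookrightarrow A_N$ coming from a $\sfW_1$-presentation $0\to N\to A_N\to B_N\to 0$ and use that $\bsfF$ is closed under cokernels of monomorphisms; for (a), pull back a $\sfW_2$-presentation of $N''$ along $N\twoheadrightarrow N''$ to split off a $\bsfF$-by-$\bsfC$ summand and then perform one more pullback; (b) and (d) are dual. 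Granting (a)--(d), the $2$-out-of-$3$ property for $0\to M'\to M\to M''\to 0$ follows: pulling back a $\sfW_2$-presentation of $M''$ along $M\twoheadrightarrow M''$ yields $0\to A''\to Q\to M\to 0$ and $0\to M'\to Q\to B''\to 0$, so if $M',M''\in\sfW$ then (b) gives $Q\in\sfW$ and (c) gives $M\in\sfW$, and if $M,M''\in\sfW$ then (a) gives $Q\in\sfW$ and (d) gives $M'\in\sfW$; the case $M',M\in\sfW$ is handled symmetrically by pushing out a $\sfW_1$-presentation of $M'$ along $M'\hookrightarrow M$ and applying (b) then (c). Closure of $\sfW$ under direct summands then reduces to $2$-out-of-$3$: for $M\oplus N\in\sfW$, take the direct sum of special $\bsfC$-precovers of $M$ and of $N$, apply $2$-out-of-$3$ to conclude the kernel lies in $\sfW\cap\sfF=\bsfF$, and pass to the summand over $M$. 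I expect (iii) to be the only real obstacle — organizing the pullback/pushout diagrams so that heredity and compatibility get invoked at exactly the right spot, and choosing (a)--(d) so that the three cases of $2$-out-of-$3$ chain together without circularity; steps (i) and (ii) are short once one decides to compare a given $\bsfF$-by-$\bsfC$ or $\bsfC$-by-$\bsfF$ sequence with a special precover or preenvelope of one of its outer terms.
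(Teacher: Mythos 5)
Your proof is correct: the identification $\sfW_1=\sfW_2$ via special (pre)covers/(pre)envelopes of the outer terms, the computation $\sfC\cap\sfW=\bsfC$ and $\sfW\cap\sfF=\bsfF$ from orthogonality and splitting, and the reduction of thickness to the four one-sided closure statements (with (a),(b) following from (c),(d) after splitting off the $\bsfF$-by-$\bsfC$ extension) all check out, including the final reduction of closure under summands to 2-out-of-3. The paper does not prove this statement itself but quotes it from Gillespie's \emph{Fund.\ Math.}\ paper \cite{G15}, and your argument is essentially the one given there.
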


The main aim of this paper is to generalize the above theorem, describing a method to construct model structures (which might not be abelian) on general categories. Note that the key result used to establish Hovey's correspondence is that a pair $(\sfC, \sfF)$ of classes of objects in $\sfA$ is a complete cotorsion pair if and only if the pair $(\Mon(\sfC), \Epi(\sfF))$ forms a weak factorization system in $\sfA$; see Theorem \ref{cot-wfs}. Here,
\begin{align*}
\Mon(\sfC) & = \{\alpha\ |\
                 \alpha\ \text {is a monomorphism}\ \text{with}\
                 \Coker(\alpha) \in \sfC\}, \ \textrm{and} \\
\Epi(\sfF) & = \{\alpha\ |\
                 \alpha\ \text {is an epimorphism}\ \text{with}\
                 \Ker(\alpha) \in \sfF\}.
\end{align*}
It is reasonable to consider weak factorization systems in general categories which take the role of complete cotorsion pairs in abelian categories. Therefore, we need to find appropriate conditions on weak factorization systems in general categories which are analogues of the hereditary and compatible conditions for complete cotorsion pairs in abelian categories.

Recall from Joyal \cite[Definition C.0.20]{Jo08} that a class $\calC$ of morphisms in $\sfE$ satisfies {\it the left cancellation property} if
\begin{center}
$\beta\alpha\in\calC$ and $\beta\in\calC$ $\Rightarrow$ $\alpha\in\calC$.
\end{center}
Dually, a class $\calF$ of morphisms in $\sfE$ satisfies {\it the right cancellation property} if
\begin{center}
$\beta\alpha\in\calF$ and $\alpha\in\calF$ $\Rightarrow$ $\beta\in\calF$.
\end{center}
The following theorem tells us that these cancellation properties on weak factorization systems properly generalize the hereditary condition on cotorsion pairs since they coincide in the abelian situation.

\begin{intthm}\label{thmA}
Let $(\sfC, \sfF)$ be a pair of classes of objects in $\sfA$. Then the following are equivalent.
\begin{eqc}
\item $(\sfC, \sfF)$ is a complete and hereditary cotorsion pair.
\item $(\Mon(\sfC), \Epi(\sfF))$ is a weak factorization system such that $\Mon(\sfC)$ satisfies the left cancellation property and $\Epi(\sfF)$ satisfies the right cancellation property.
\item $(\Mon(\sfC), \Epi(\sfF))$ is a weak factorization system such that $\Mon(\sfC)$ satisfies the left cancellation property or $\Epi(\sfF)$ satisfies the right cancellation property.
\end{eqc}
\end{intthm}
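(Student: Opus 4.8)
The plan is to establish the cycle of implications \eqclbl{i}$\Rightarrow$\eqclbl{ii}$\Rightarrow$\eqclbl{iii}$\Rightarrow$\eqclbl{i}. The implication \eqclbl{ii}$\Rightarrow$\eqclbl{iii} is immediate. For the other two, the key observation is that by Theorem~\ref{cot-wfs} the statement ``$(\sfC,\sfF)$ is a complete cotorsion pair'' is already equivalent to ``$(\Mon(\sfC),\Epi(\sfF))$ is a weak factorization system''; hence in both \eqclbl{i}$\Rightarrow$\eqclbl{ii} and \eqclbl{iii}$\Rightarrow$\eqclbl{i} the only thing left to transfer is the hereditary condition on one side against the cancellation properties on the other.

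For \eqclbl{i}$\Rightarrow$\eqclbl{ii}, assume $(\sfC,\sfF)$ is complete and hereditary; by Theorem~\ref{cot-wfs} the pair $(\Mon(\sfC),\Epi(\sfF))$ is a weak factorization system, so it remains to check the cancellation properties. Suppose $\alpha\colon X\to Y$ and $\beta\colon Y\to Z$ are monomorphisms with $\beta\alpha\in\Mon(\sfC)$ and $\beta\in\Mon(\sfC)$. Then $\alpha$ is a monomorphism, and the snake lemma applied to the map of short exact sequences with rows $0\to X\xrightarrow{\alpha}Y\to\Coker(\alpha)\to0$ and $0\to X\xrightarrow{\beta\alpha}Z\to\Coker(\beta\alpha)\to0$, with identity on the left and $\beta$ in the middle, yields a short exact sequence $0\to\Coker(\alpha)\to\Coker(\beta\alpha)\to\Coker(\beta)\to0$. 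Since $\Coker(\beta\alpha),\Coker(\beta)\in\sfC$ and $\sfC$ is closed under kernels of epimorphisms, we get $\Coker(\alpha)\in\sfC$, i.e.\ $\alpha\in\Mon(\sfC)$; this is the left cancellation property. Dually, if $\beta\alpha\in\Epi(\sfF)$ and $\alpha\in\Epi(\sfF)$, then $\beta$ is an epimorphism, the snake lemma produces $0\to\Ker(\alpha)\to\Ker(\beta\alpha)\to\Ker(\beta)\to0$, and since $\sfF$ is closed under cokernels of monomorphisms we conclude $\Ker(\beta)\in\sfF$, i.e.\ $\beta\in\Epi(\sfF)$.

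For \eqclbl{iii}$\Rightarrow$\eqclbl{i}, since $(\Mon(\sfC),\Epi(\sfF))$ is a weak factorization system, Theorem~\ref{cot-wfs} shows that $(\sfC,\sfF)$ is a complete cotorsion pair, so it only remains to see that it is hereditary. Recalling the standard fact that for a cotorsion pair the conditions ``$\sfC$ is closed under kernels of epimorphisms'' and ``$\sfF$ is closed under cokernels of monomorphisms'' are equivalent, it suffices to verify one of them, and by symmetry we may assume that $\Mon(\sfC)$ has the left cancellation property. Let $0\to K\xrightarrow{\iota}C_1\xrightarrow{\pi}C_2\to0$ be exact with $C_1,C_2\in\sfC$. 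The zero morphism $0\to C_1$ is a monomorphism with cokernel $C_1\in\sfC$, hence lies in $\Mon(\sfC)$, and $\iota$ is a monomorphism with cokernel $C_2\in\sfC$, hence lies in $\Mon(\sfC)$; as $0\to C_1$ factors as $\iota\circ(0\to K)$, left cancellation gives $(0\to K)\in\Mon(\sfC)$, that is, $K=\Coker(0\to K)\in\sfC$. Thus $\sfC$ is closed under kernels of epimorphisms, and $(\sfC,\sfF)$ is hereditary. The case where instead $\Epi(\sfF)$ has the right cancellation property is dual: for exact $0\to F_1\to F_2\xrightarrow{p}C\to0$ with $F_1,F_2\in\sfF$ one factors $F_2\to0$ as $(C\to0)\circ p$ with $(F_2\to0)\in\Epi(\sfF)$ and $p\in\Epi(\sfF)$, and right cancellation gives $(C\to0)\in\Epi(\sfF)$, i.e.\ $C\in\sfF$. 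The only substantive ingredients are the snake-lemma three-term sequences in \eqclbl{i}$\Rightarrow$\eqclbl{ii} and the left--right symmetry of hereditariness for cotorsion pairs used in \eqclbl{iii}$\Rightarrow$\eqclbl{i}; I anticipate no real obstacle, the one point to get right being the choice of factorizations through the zero object so that the single cancellation hypothesis in \eqclbl{iii} can be brought to bear.
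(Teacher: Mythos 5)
Your proof is correct and takes essentially the same route as the paper: your snake-lemma computations and the factorization-through-the-zero-object trick are precisely the content of Lemmas \ref{resolving} and \ref{coresolving}, and your appeal to the equivalence of the two halves of hereditariness is exactly the paper's Lemma \ref{test hereditary}, which cites Becker for that fact. One small point of care: that equivalence holds for \emph{complete} cotorsion pairs (which you do have available from Theorem \ref{cot-wfs}), so it should not be stated for arbitrary cotorsion pairs.
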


We then give the definition of compatible weak factorization systems; see Definition \ref{compatibledef}.  As asserted by the next theorem, the compatible condition on weak factorization systems indeed generalizes the one for cotorsion pairs.

\begin{intthm}\label{thmB}
Let $(\sfC, \bsfF)$ and $(\bsfC, \sfF)$ be two pairs of classes of objects in $\sfA$. Then the following are equivalent.
\begin{eqc}
\item $(\sfC, \bsfF)$ and $(\bsfC, \sfF)$ are compatible complete cotorsion pairs.

\item $(\Mon(\sfC), \Epi(\bsfF))$ and $(\Mon(\bsfC), \Epi(\sfF))$ are compatible weak factorization systems.
\end{eqc}
\end{intthm}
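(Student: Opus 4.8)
The plan is to bootstrap from Theorem~\ref{cot-wfs}. Applying that theorem to each pair, $(\sfC,\bsfF)$ is a complete cotorsion pair if and only if $(\Mon(\sfC),\Epi(\bsfF))$ is a weak factorization system, and likewise $(\bsfC,\sfF)$ is a complete cotorsion pair if and only if $(\Mon(\bsfC),\Epi(\sfF))$ is a weak factorization system. Consequently, in proving (i)$\Leftrightarrow$(ii) I may assume from the outset that both pairs are complete cotorsion pairs --- equivalently, that both pairs of morphism classes are weak factorization systems --- and reduce the problem to matching the two \emph{compatibility} clauses: the inclusion condition and the intersection condition.

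For the inclusion condition I would show that $\bsfC\subseteq\sfC$ if and only if $\Mon(\bsfC)\subseteq\Mon(\sfC)$, and dually $\bsfF\subseteq\sfF$ if and only if $\Epi(\bsfF)\subseteq\Epi(\sfF)$. The forward implications are immediate from the definitions of $\Mon(-)$ and $\Epi(-)$. For the converse, if $\Mon(\bsfC)\subseteq\Mon(\sfC)$ and $X\in\bsfC$, then the monomorphism $0\to X$ has cokernel $X\in\bsfC$, so it lies in $\Mon(\bsfC)\subseteq\Mon(\sfC)$, whence $X=\Coker(0\to X)\in\sfC$; the statement about $\Epi$ is dual. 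The parenthetical ``equivalently'' in each notion of compatibility --- $\bsfC\subseteq\sfC\Leftrightarrow\bsfF\subseteq\sfF$ for the cotorsion pairs and $\bcalC\subseteq\calC\Leftrightarrow\bcalF\subseteq\calF$ for the weak factorization systems --- is then a formality: for cotorsion pairs it is the standard orthogonality between the two classes of a cotorsion pair, and for weak factorization systems it follows because each of the two classes is characterised by a lifting property against the other.

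It then remains to match the intersection condition $\bsfC\cap\sfF=\sfC\cap\bsfF$ with the corresponding clause of Definition~\ref{compatibledef} for the weak factorization systems $(\Mon(\sfC),\Epi(\bsfF))$ and $(\Mon(\bsfC),\Epi(\sfF))$. The key observation is that for a weak factorization system $(\mathcal L,\mathcal R)$ on $\sfA$, the objects $X$ with $0\to X\in\mathcal L$ and $X\to 0\in\mathcal R$ admit an intrinsic description --- they are the objects that are ``cofibrant'' for $\mathcal L$ and ``fibrant'' for $\mathcal R$ --- and that for $(\Mon(\sfC),\Epi(\bsfF))$ this class is exactly $\sfC\cap\bsfF$, while for $(\Mon(\bsfC),\Epi(\sfF))$ it is exactly $\bsfC\cap\sfF$; here one uses crucially that in $\sfA$ the zero object is simultaneously initial and terminal. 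Unwinding Definition~\ref{compatibledef} for these two weak factorization systems should then reduce to the assertion that these two classes of objects coincide, that is, $\sfC\cap\bsfF=\bsfC\cap\sfF$. Assembling the three translations yields (i)$\Leftrightarrow$(ii).

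I expect this last step to be the main obstacle: one must check that the second clause of Definition~\ref{compatibledef}, which is phrased for weak factorization systems in an arbitrary category, specializes in the abelian category $\sfA$ to exactly the object-level identity $\sfC\cap\bsfF=\bsfC\cap\sfF$, and not to something formally stronger or weaker. This is where the precise form of the definition, together with the fact that $0$ serves as both initial and terminal object of $\sfA$, must be invoked; by contrast, the passages through Theorem~\ref{cot-wfs} and through $\Mon(-)$, $\Epi(-)$ in the first two steps are routine.
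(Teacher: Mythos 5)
There is a genuine gap, and it sits exactly where you suspected. Your plan treats the compatibility of the two weak factorization systems as an object-level condition, asserting that Definition~\ref{compatibledef} ``should reduce to'' the identity $\sfC\cap\bsfF=\bsfC\cap\sfF$. It does not. Conditions (CP2) and (CP3) of that definition quantify over \emph{arbitrary} morphisms: (CP2) demands a 2-out-of-3 property for $\Epi(\bsfF)$ inside $\Epi(\sfF)$ for all composable epimorphisms, and (CP3) demands that whenever $c\in\Mon(\bsfC)$, $f\in\Epi(\sfF)$ and $fc\in\Mon(\bsfC)$, then $f\in\Epi(\bsfF)$. Specializing these to morphisms into or out of $0$ recovers the object-level identity, which is why your argument works for \proofofimp{ii}{i} on that clause; but the converse direction \proofofimp{i}{ii} requires deriving the full morphism-level statements from the object-level hypothesis, and that is the substantive content of the theorem. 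The paper does this by invoking Gillespie's theorem to produce a thick class $\sfW$ with $\bsfC=\sfC\cap\sfW$ and $\bsfF=\sfF\cap\sfW$, and then applying the Snake Lemma: for (CP2) one gets a short exact sequence $0\to\Ker(\alpha)\to\Ker(\beta\alpha)\to\Ker(\beta)\to 0$ and uses thickness of $\sfW$; for (CP3) one gets $0\to\Ker(f)\to\Coker(c)\to\Coker(fc)\to 0$ and again uses thickness. None of this is present in your proposal, and it cannot be bypassed: the object-level identity alone says nothing about kernels of epimorphisms between nonzero objects.

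A secondary issue: even in the direction \proofofimp{ii}{i}, the inclusion $\sfC\cap\bsfF\subseteq\bsfC\cap\sfF$ does not follow from an ``intrinsic description'' of bifibrant objects. Given $N$ with $0\to N\in\Mon(\sfC)$ and $N\to 0\in\Epi(\bsfF)$, none of (CP1)--(CP3) applies directly to conclude $0\to N\in\Mon(\bsfC)$; the paper has to route through Lemma~\ref{compatible} (a pullback argument using all three conditions) and Lemma~\ref{weclass} ($\bcalC=\calC\cap\calW_{\bcalC,\bcalF}$, proved by a lifting argument). Your handling of (CP1) and of the reverse inclusion $\bsfC\cap\sfF\subseteq\sfC\cap\bsfF$ (the latter is a direct application of (CP3) to $0\to M\to 0$) is fine, but the two items above are the core of the proof and are missing.
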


The following result as a generalization of Gillespie's theorem (\cite[Theorem 1.1]{G15}) provides a method to construct model structures on general categories via compatible weak factorization systems satisfying certain conditions. To extend the application of this theorem, we use a slightly more general definition of model structures without assuming that $\sfE$ is bicomplete (see Definition \ref{new df of model structure}), and say that a model structure is {\it hereditary} if both the class of cofibrations and the class of trivial cofibrations satisfy the left cancellation property.

Recall from van den Berg and Garner \cite{VG12} that a weak factorization system $(\calC, \calF)$ in $\sfE$ satisfies the {\it Frobenius property} if $\sfE$ has pullbacks along morphisms in $\calF$, and the morphisms in $\calC$ are preserved under pullbacks along morphisms in $\calF$. Frobenius's name is invoked here, because there is a connection between the Frobenius property for a weak factorization system and Lawvere's Frobenius condition \cite{Law}; see Clementino, Giuli and Tholen \cite{CGT} for an explanation.

\begin{intthm}\label{thmC}
Let $(\calC, \bcalF)$ and $(\bcalC, \calF)$ be two compatible weak factorization systems in $\sfE$ satisfying the following conditions:
\begin{enumerate}
\item $\sfE$ has pushouts along morphisms in $\calC$ and pullbacks along morphisms in $\calF$;

\item $(\bcalC, \calF)$ satisfies the Frobenius property;

\item both $\calC$ and $\bcalC$ satisfy the left cancellation property.
\end{enumerate}
Then $(\calC, \calW_{\bcalC, \bcalF}, \calF)$ forms a hereditary model structure on $\sfE$, where
\[
\calW_{\bcalC,\bcalF} = \{\alpha\ |\
         \alpha\ \text{can be decomposed as}\ \alpha = \widetilde{f}\widetilde{c}\ \text{with}\
         \widetilde{c} \in \bcalC\ \text{and}\ \widetilde{f} \in \bcalF\}.
\]
\end{intthm}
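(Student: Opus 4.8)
The plan is to verify the three defining axioms of a model structure for the triple $(\calC, \calW_{\bcalC,\bcalF}, \calF)$: that $(\calC, \calW_{\bcalC,\bcalF} \cap \calF)$ and $(\calC \cap \calW_{\bcalC,\bcalF}, \calF)$ are weak factorization systems, and that $\calW_{\bcalC,\bcalF}$ satisfies the 2-out-of-3 property. The compatibility hypothesis gives $\bcalC \subseteq \calC$ and $\bcalF \subseteq \calF$ together with $\bcalC \cap \calF = \calC \cap \bcalF$; call this common class $\calS$. The first reduction I would make is to identify the two ``trivial half'' classes: I expect to prove $\calW_{\bcalC,\bcalF} \cap \calF = \bcalF$ and $\calC \cap \calW_{\bcalC,\bcalF} = \bcalC$. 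One inclusion in each is immediate from $\bcalC \subseteq \calC$, $\bcalF \subseteq \calF$ and the fact that identities lie in every class of a weak factorization system (so an $\alpha \in \bcalF$ factors as $\id \circ \alpha$ with $\id \in \bcalC$, putting $\alpha \in \calW_{\bcalC,\bcalF}$). For the reverse inclusions one takes $\alpha = \widetilde f \widetilde c \in \calF$ with $\widetilde c \in \bcalC$, $\widetilde f \in \bcalF$; since $\widetilde f \in \bcalF \subseteq \calF$ and $\alpha \in \calF$, the right cancellation property available for $\calF$ (this is where hereditariness of the ambient systems, encoded via the left cancellation hypothesis on $\calC, \bcalC$ and Theorem~\ref{thmA}'s dual, enters) forces $\widetilde c \in \calF$, hence $\widetilde c \in \bcalC \cap \calF = \calS \subseteq \bcalF$; then $\alpha = \widetilde f \widetilde c$ is a composite of two maps in $\bcalF$, and closure of $\bcalF$ under composition gives $\alpha \in \bcalF$. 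The symmetric argument, factoring an $\alpha \in \calC$ through $\bcalC, \bcalF$ and applying left cancellation for $\calC$, yields $\calC \cap \calW_{\bcalC,\bcalF} = \bcalC$.

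Granting these two identifications, the weak factorization system axioms become: $(\calC, \bcalF)$ and $(\bcalC, \calF)$ are weak factorization systems — which is exactly our hypothesis. So the entire content of the theorem collapses onto the 2-out-of-3 property for $\calW_{\bcalC,\bcalF}$, and I expect that to be the main obstacle. I would break it into the three sub-statements. Closure of $\calW_{\bcalC,\bcalF}$ under composition: given $\alpha = \widetilde f_1 \widetilde c_1$ and $\beta = \widetilde f_2 \widetilde c_2$ with the $\widetilde c_i \in \bcalC$, $\widetilde f_i \in \bcalF$, one must refactor $\beta\alpha = \widetilde f_2 \widetilde c_2 \widetilde f_1 \widetilde c_1$; the middle composite $\widetilde c_2 \widetilde f_1$ is refactored using the weak factorization system $(\bcalC, \bcalF)$ as $\widetilde f_3 \widetilde c_3$, and then $\beta\alpha = (\widetilde f_2 \widetilde f_3)(\widetilde c_3 \widetilde c_1)$ with $\widetilde f_2 \widetilde f_3 \in \bcalF$ and $\widetilde c_3 \widetilde c_1 \in \bcalC$ by closure under composition. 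For the two cancellation directions I would use the standard retract/lifting manipulation, but the Frobenius property on $(\bcalC, \calF)$ and the existence of pushouts along $\calC$ (condition (1)) are the crucial inputs: to show $\beta\alpha, \alpha \in \calW_{\bcalC,\bcalF} \Rightarrow \beta \in \calW_{\bcalC,\bcalF}$ (and dually with $\beta$), one factors $\beta = \widetilde f \widetilde c$ with $\widetilde c \in \bcalC$ and $\widetilde f \in \calF$, then must see $\widetilde f \in \bcalF$; equivalently $\widetilde f$ should be shown to have the right lifting property against $\bcalC$. The Frobenius property lets one pull back a chosen $\bcalC$-cofibration along $\widetilde f$ and stay inside $\bcalC$, converting a lifting problem against $\widetilde f$ into one against $\widetilde c$ (which $\widetilde f$ already lifts against) after a pushout-product style argument; this is the technical heart.

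A cleaner route to the same end, which I would actually pursue, is to first establish that $\calW_{\bcalC,\bcalF}$ admits the symmetric description
\[
\calW_{\bcalC,\bcalF} = \{\alpha \mid \alpha = c f \text{ with } f \in \bcalF,\ c \in \bcalC\}
\]
(composition in the other order), mirroring the two formulas in Gillespie's theorem. This follows by taking $\alpha = \widetilde f \widetilde c$, applying the weak factorization system $(\bcalC, \bcalF)$ to the morphism $\widetilde f \widetilde c$ itself — no, rather one factors the identity-adjacent square: apply $(\bcalC,\bcalF)$ to obtain $\widetilde c \widetilde f$-shaped data from $\widetilde f \widetilde c$ via the lifting axiom, using that $\widetilde c \perp \bcalF \ni \widetilde f$. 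Once both descriptions are in hand, closure under composition and both 2-out-of-3 cancellations become nearly formal diagram chases, each invoking one of the two factorizations plus the Frobenius property exactly once. Finally, hereditariness of the resulting model structure is free: its cofibrations are $\calC$ and its trivial cofibrations are $\calC \cap \calW_{\bcalC,\bcalF} = \bcalC$, and both satisfy the left cancellation property by hypothesis (3). The one point demanding care throughout is that $\sfE$ is not assumed bicomplete, so every pushout or pullback invoked must be checked to be one of the kind guaranteed by condition (1) or by the Frobenius property — in particular all refactoring squares must be arranged so that the relevant leg lies in $\calC$ or $\calF$ before a pushout or pullback is taken.
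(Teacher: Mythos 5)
There are two genuine gaps in your argument, both at load-bearing points. First, your proof of the identification $\calF \cap \calW_{\bcalC,\bcalF} = \bcalF$ (and dually $\calC \cap \calW_{\bcalC,\bcalF} = \bcalC$) appeals to a right cancellation property for $\calF$, which is not among the hypotheses and does not follow from the left cancellation hypothesis on $\calC$ and $\bcalC$: the equivalence of the two cancellation properties in Theorem~\ref{thmA} is a statement about cotorsion pairs in an abelian category, proved via Becker's result on hereditary cotorsion pairs, and there is no analogue for weak factorization systems in a general category $\sfE$. The correct route (Lemma~\ref{weclass} in the paper, due to Sattler) needs only (CP1) and is a pure lifting argument: given $\alpha \in \calC \cap \calW_{\bcalC,\bcalF}$ with $\alpha = \widetilde{f}\widetilde{c}$, one first lifts $\widetilde{c}$ against an arbitrary $f \in \calF$, then uses that $\alpha \in \calC$ lifts against $\widetilde{f} \in \bcalF$ to produce a section $h'$ of $\widetilde{f}$ with $h'\alpha = \widetilde{c}$, and composes the two lifts; no cancellation is involved.

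Second, your closure-under-composition step factors the middle composite $\widetilde{c}_2\widetilde{f}_1$ ``using the weak factorization system $(\bcalC, \bcalF)$'' --- but $(\bcalC, \bcalF)$ is \emph{not} a weak factorization system (a morphism factors as $\bcalF \circ \bcalC$ precisely when it lies in $\calW_{\bcalC,\bcalF}$, which is not all morphisms). Showing that $\widetilde{c}_2\widetilde{f}_1$, i.e.\ a composite of a map in $\calW_{\bcalC,\bcalF}$ followed by a map in $\bcalC$, again lies in $\calW_{\bcalC,\bcalF}$ is exactly the technical heart of the proof (Lemma~\ref{commutative}): one factors it as $\widetilde{f}c$ with $c \in \calC$, pulls back the $\bcalC$-map along $\widetilde{f}$ using the Frobenius property to get $\widetilde{c} \in \bcalC$, and shows the induced comparison map $\tau$ lies in $\calC$ (left cancellation of $\calC$) and in $\calW_{\bcalC,\bcalF}$ (via the (CP2)/(CP3)-based Lemma~\ref{compatible}), whence $\tau \in \calC \cap \calW_{\bcalC,\bcalF} = \bcalC$ and $c = \widetilde{c}\tau \in \bcalC$. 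Your sketch gestures at the Frobenius property for the 2-out-of-3 cancellations but misses that it is already indispensable for plain composition; and your alternative ``symmetric description'' $\calW_{\bcalC,\bcalF} = \{cf \mid f \in \bcalF,\ c \in \bcalC\}$ is neither proved (your own sketch of it breaks off) nor used in the paper, so it cannot be relied on. The overall skeleton --- reduce to Lemma~\ref{weclass} plus 2-out-of-3, and get hereditariness for free from hypothesis (3) --- is right, but both identifications and the composition step need the lifting-theoretic arguments above rather than the ones you propose.
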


Since weak factorization systems induced by complete cotorsion pairs automatically satisfy the Frobenius property (see Proposition \ref{Frobenius property}), it follows from Theorems \ref{thmA} and \ref{thmB} that the weak factorization systems $(\Mon(\sfC), \Epi(\bsfF))$ and $(\Mon(\bsfC), \Epi(\sfF))$ induced by complete, hereditary and compatible cotorsion pairs $(\sfC, \bsfF)$ and $(\bsfC, \sfF)$ in $\sfA$ satisfy all conditions specified in Theorem \ref{thmC}. We mention that there are non-abelian examples: in Section \ref{sec:examples}, we consider the classical and constructive Kan-Quillen model structures on the category $\sset$ of simplicial sets and the standard projective model structure on the category $\CH$ of nonnegative chain complexes of modules over a ring $R$, and show that the weak factorization systems associated to these model structures satisfy all conditions specified in Theorem \ref{thmC}.

\begin{rmk*}
We shall mention that there do exist some other classical examples of model structures which do not satisfy Condition (2) or (3) specified in Theorem \ref{thmC}; see Examples \ref{exa1} and \ref{exa2}. We hope to use Theorem \ref{thmC} to construct new model structures on some important categories. We believe that this is a very interesting question, but at this moment, we have not obtained a satisfactory achievement.
\end{rmk*}

\section{Weak factorization systems and cotorsion pairs}
\noindent
In this section, we introduce the compatible condition for weak factorization systems, which generalizes the one for cotorsion pairs, and prove Theorems \ref{thmA} and \ref{thmB} as advertised in the introduction.

\begin{bfhpg}[\bf Lifting property]
Let $l: A \to B$ and $r: C \to D$ be two morphisms in $\sfE$. Recall that $l$ has the {\it left lifting property} with respect to $r$ (or $r$ has the {\it right lifting property} with respect to $l$) if for every pair $f: A \to C$ and $g: B \to D$ of morphisms such that $rf = gl$, there exists a morphism $t: B \to C$ such that $f = tl$ and $g = rt$, that is, the following diagram commutes:
\[
\xymatrix{
A \ar[d]_{l} \ar[r]^{f} & C \ar[d]^{r} \\
  B \ar@{.>}[ur]|-{t} \ar[r]_{g} & D.
}
\]
For a class $\calC$ of morphisms in $\sfE$, denote by $\calC^\Box$ the class of morphisms in $\sfE$ having the right lifting property with respect to all morphisms in $\calC$. The class $^\Box\calC$ is defined dually.
\end{bfhpg}

The following definition of weak factorization systems was given by Bousfield \cite{Bo77}.

\begin{dfn}
A pair $(\calC, \calF)$ of classes of morphisms in $\sfE$ is called a {\it weak factorization system} if $\calC^\Box = \calF$ and ${^\Box\calF} = \calC$, and every morphism $\alpha$ in $\sfE$ can be decomposed as $\alpha = fc$ with $c \in \calC$ and $f \in \calF$.
\end{dfn}

\begin{rmk}\label{rmk1.3}
Let $(\calC, \calF)$ be a weak factorization system in $\sfE$. Then

\begin{prt}
\item the classes $\calC$ and $\calF$ are closed under compositions and retracts, and contain the isomorphisms in $\sfE$;

\item if $\sfE$ has pushouts along morphisms in $\calC$, then $\calC$ is closed under pushouts;

\item if $\sfE$ has pullbacks along morphisms in $\calF$, then $\calF$ is closed under pullbacks;

\item $\calC \cap \calF$ is the class of isomorphisms in $\sfE$.
\end{prt}
For details, see \cite[Propositions D.1.2 and D.1.3]{Jo08}.
\end{rmk}

\begin{bfhpg}[\bf Complete cotorsion pairs] \label{cotpair}
A pair $(\sfC,\sfF)$ of classes of objects in $\sfA$ is called a \emph{cotorsion pair} if $\sfC^{\bot} = \sfF$ and $^{\bot}\sfF =\sfC$, where
\begin{align*}
\sfC^{\bot} & = \{M \in \sfA \mid \textrm{Ext}_{\sfA}^{1}(C, M) = 0 \textrm{ for all objects } C \in \sfC \}, \ \textrm{and} \\
^{\bot}\sfF & = \{M \in \sfA \mid \textrm{Ext}_{\sfA}^{1}(M, D) = 0 \textrm{ for all objects } D \in \sfF\}.
\end{align*}
Following Enochs and Jenda \cite{rha}, a cotorsion pair $(\sfC,\sfF)$ is said to be \emph{complete} if for any object $M$ in $\sfA$, there exist short exact sequences $0 \to D \to C \to M \to 0$ and $0 \to M \to D' \to C' \to 0$ in $\sfA$ with $D, D' \in \sfF$ and $C, C' \in \sfC$.
\end{bfhpg}

Recall that for a class $\sfC$ of objects in $\sfA$,
\begin{align*}
\Mon(\sfC) & = \{\alpha \mid
                 \alpha\ \text {is a monomorphism with } \Coker(\alpha) \in \sfC\}, \textrm{ and}\\
\Epi(\sfC) & = \{\alpha \mid \alpha \text { is an epimorphism with } \Ker(\alpha) \in \sfC\}.
\end{align*}
The next result is essentially due to Hovey \cite{Ho02}; see also Positselski and \v{S}\v{t}ov\'{\i}\v{c}ek \cite[Theorem 2.4]{PS22}.

\begin{thm}\label{cot-wfs}
A pair $(\sfC, \sfF)$ of classes of objects in $\sfA$ is a complete cotorsion pair if and only if $(\Mon(\sfC), \Epi(\sfF))$ is a weak factorization system in $\sfA$.
\end{thm}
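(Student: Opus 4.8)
The plan is to prove both implications of Theorem~\ref{cot-wfs} by translating the defining conditions of a weak factorization system into the Ext-vanishing and completeness conditions of a cotorsion pair, and back. The key dictionary is the elementary but crucial observation that for a monomorphism $\alpha$ with cokernel $C$ and an epimorphism $\beta$ with kernel $F$, the lifting problem for $\alpha$ against $\beta$ is governed by $\operatorname{Ext}^1_{\sfA}(C,F)$; more precisely, a morphism in $\Mon(\sfC)$ has the left lifting property with respect to a morphism in $\Epi(\sfF)$ for \emph{all} choices exactly when $\operatorname{Ext}^1_{\sfA}(\sfC,\sfF)=0$, and conversely every solvable lifting problem encodes a splitting of a pullback/pushout extension.

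First I would establish the ``only if'' direction: assume $(\sfC,\sfF)$ is a complete cotorsion pair. To see $\Mon(\sfC)^\Box = \Epi(\sfF)$, I would argue both inclusions. For $\supseteq$: given $\alpha\in\Mon(\sfC)$ and $\beta\in\Epi(\sfF)$, a lifting problem $\beta f = g\alpha$ can be solved by a diagram chase — pull back $\beta$ along $g$ to reduce to the case where the square is built from the short exact sequence defining $\alpha$, and then use $\operatorname{Ext}^1_{\sfA}(\Coker\alpha,\Ker\beta)=0$ to extend. For $\subseteq$: if $\beta$ lies in $\Mon(\sfC)^\Box$, then testing against the short exact sequences provided by completeness (in particular against monomorphisms $0\to D\to C\to M\to 0$ with $C\in\sfC$) forces $\beta$ to be an epimorphism (test against $0\to 0\to C'\to C'\to 0$ for suitable $C'\in\sfC$, or against $0\to X\to I\to C\to 0$), and forces $\Ker\beta\in\sfF$ by extracting a splitting of $\operatorname{Ext}^1$-classes; hence $\beta\in\Epi(\sfF)$. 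Dually ${}^\Box\Epi(\sfF)=\Mon(\sfC)$. Finally, the factorization axiom is exactly the content of completeness: given any $\alpha\colon X\to Y$, embed $X$ via $0\to X\to D'\to C'\to 0$ from completeness, take a pushout to factor $\alpha$, then use a short exact sequence with middle term in $\sfF$ on the other coordinate; after a short diagram chase one produces $\alpha = f c$ with $c\in\Mon(\sfC)$ and $f\in\Epi(\sfF)$. (Some care with the snake lemma and the horseshoe-type construction is needed here to land the cokernel of $c$ in $\sfC$ and the kernel of $f$ in $\sfF$ simultaneously.)

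Next I would do the ``if'' direction: assume $(\Mon(\sfC),\Epi(\sfF))$ is a weak factorization system. To recover the cotorsion pair equations, note first that $\Mon(\sfC)$ and $\Epi(\sfF)$ contain all isomorphisms and are closed under retracts by Remark~\ref{rmk1.3}; applying these facts to morphisms into or out of the zero object identifies which objects belong to $\sfC$ and $\sfF$. One then checks $\sfC^\bot=\sfF$: an object $F$ lies in $\sfF$ iff $0\to F$ (or $F\to \ast$ in the epi form, i.e.\ the map from $F$) sits in the appropriate class, and the lifting property of $\Mon(\sfC)$ against this map unwinds, via the correspondence between extensions and lifting problems, to $\operatorname{Ext}^1_{\sfA}(C,F)=0$ for all $C\in\sfC$; the reverse inclusion $\sfF\subseteq\sfC^\bot$ and the dual statements $\sfC={}^\bot\sfF$ are symmetric. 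Completeness is then read off from the factorization axiom: factoring the map $0\to M$ through $\Mon(\sfC)$ then $\Epi(\sfF)$ produces a short exact sequence $0\to F\to C\to M\to 0$ with $F\in\sfF$, $C\in\sfC$ (here one uses that the composite is the given map and that a composite of a mono with cokernel in $\sfC$ followed by an epi with kernel in $\sfF$, equal to $0\to M$, forces the middle object into $\sfC$ and the kernel into $\sfF$ — a short diagram chase), and dually factoring $M\to 0$ gives the other sequence.

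The main obstacle I anticipate is the bookkeeping in the factorization-versus-completeness equivalence: turning an arbitrary factorization $\alpha = fc$ in $\sfA$ into short exact sequences with the correct terms, and conversely promoting the two short exact sequences from completeness into a functorial-enough factorization of an \emph{arbitrary} morphism (not just a map to or from $0$). This requires combining the pushout of the chosen resolution of the source along $\alpha$ with the kernel-resolution of the target, and then invoking the snake lemma to verify that the cokernel of the resulting monomorphism lies in $\sfC$ and the kernel of the resulting epimorphism lies in $\sfF$. Since this is ``essentially due to Hovey'' and also appears in \cite[Theorem 2.4]{PS22}, I would streamline the argument by citing the standard horseshoe/pushout construction and focusing the written proof on the lifting-property$\leftrightarrow$Ext-vanishing translation, which is the conceptual heart.
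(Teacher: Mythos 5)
The paper does not actually prove Theorem~\ref{cot-wfs}; it is quoted as ``essentially due to Hovey'' with a pointer to \cite[Theorem 2.4]{PS22}, so there is no in-paper argument to compare against. Your sketch is the standard proof of that cited result, and its conceptual core is sound: the translation between lifting problems and $\operatorname{Ext}^1$-vanishing via pushout/pullback of extensions correctly yields $\Mon(\sfC)^\Box=\Epi(\sfF)$ and ${}^\Box\Epi(\sfF)=\Mon(\sfC)$ in one direction and $\sfC^\bot=\sfF$, ${}^\bot\sfF=\sfC$ in the other, and reading completeness off from factorizations of $0\to M$ and $M\to 0$ is immediate (the middle object \emph{is} the cokernel of the first map, so no diagram chase is needed there).

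The one place where your sketch is too optimistic is the factorization of an \emph{arbitrary} morphism $f\colon A\to B$ in the ``only if'' direction. The construction you gesture at --- embed $A$ by $0\to A\to D'\to C'\to 0$ and push out --- produces $A\to D'\oplus B\to B$ whose second map is an epimorphism with kernel $D'\in\sfF$, but whose first map has cokernel an extension of $C'$ by $B$, which need not lie in $\sfC$; i.e., it gives the factorization with the conditions in the wrong order. The correct two-step argument is: take a special $\sfC$-precover $0\to F\to C\xra{\pi}B\to 0$, form the epimorphism $(f,\pi)\colon A\oplus C\to B$ with kernel $K$, take a special $\sfF$-preenvelope $0\to K\to F'\to C'\to 0$, and push $A\oplus C$ out along $K\to F'$ to get $D$ with $D\to B$ epic with kernel $F'\in\sfF$ and $A\to D$ monic with cokernel an extension of $C'$ by $C$; one then needs that ${}^\bot\sfF$ is closed under extensions (automatic for a cotorsion pair) to conclude that this cokernel lies in $\sfC$. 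Since you explicitly propose to cite the standard construction for exactly this step, I would not call this a fatal gap, but if the proof is written out rather than cited, this is the step that must be done carefully, and your current description of it would not compile into a correct argument as stated.
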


In the following we describe a few auxiliary results before proving Theorems \ref{thmA} and \ref{thmB}.

\begin{lem}\label{resolving}
Let $\sfC$ be a class of objects in $\sfA$. Then the following are equivalent.
\begin{eqc}
\item $\sfC$ is closed under kernels of epimorphisms.

\item $\Mon(\sfC)$ satisfies the left cancellation property.
\end{eqc}
\end{lem}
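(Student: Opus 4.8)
The plan is to prove the two implications separately, translating each side into a concrete diagrammatic statement about monomorphisms with cokernels in $\sfC$.

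\proofofimp{i}{ii} Suppose $\sfC$ is closed under kernels of epimorphisms. Let $\beta\alpha$ and $\beta$ lie in $\Mon(\sfC)$; I must show $\alpha\in\Mon(\sfC)$. Write $\alpha\colon X\to Y$ and $\beta\colon Y\to Z$. First, $\alpha$ is a monomorphism because $\beta\alpha$ is. It remains to identify $\Coker(\alpha)$. The natural approach is to use the snake lemma (or a direct diagram chase) on the commutative diagram with rows $0\to X\xra{\alpha}Y\to\Coker(\alpha)\to 0$ and $0\to X\xra{\beta\alpha}Z\to\Coker(\beta\alpha)\to 0$, linked by $\beta$ in the middle column and the identity on $X$. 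Since $\id_X$ is mono and $\beta$ is mono, the snake lemma yields a short exact sequence $0\to\Coker(\alpha)\to\Coker(\beta\alpha)\to\Coker(\beta)\to 0$. Now $\Coker(\beta\alpha)\in\sfC$ and $\Coker(\beta)\in\sfC$ by hypothesis, and the map $\Coker(\alpha)\to\Coker(\beta\alpha)$ is a monomorphism, so $\Coker(\alpha)$ is the kernel of the epimorphism $\Coker(\beta\alpha)\twoheadrightarrow\Coker(\beta)$ between objects of $\sfC$; since $\sfC$ is closed under kernels of epimorphisms, $\Coker(\alpha)\in\sfC$, i.e.\ $\alpha\in\Mon(\sfC)$.

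\proofofimp{ii}{i} Conversely, suppose $\Mon(\sfC)$ satisfies the left cancellation property. Let $0\to K\xra{\iota}C\xra{\pi}C'\to 0$ be a short exact sequence with $C,C'\in\sfC$; I must show $K\in\sfC$. The idea is to exhibit $K$ as the cokernel of a morphism obtained by left cancellation. Consider the monomorphism $\alpha\colon K\to K$ given by... more usefully: embed everything over a fixed object. Pick the short exact sequence $0\to K\xra{\iota} C\to C'\to 0$ and also any presentation exhibiting membership; concretely, let $\beta\colon C\to 0$-type data be replaced by the following. Apply left cancellation to the composite $K\xra{j} P \xra{q} Q$ where $j$ is a monomorphism with $\Coker(j)\cong C$ (take $P$ to be a suitable object with $0\to K\to P\to C\to 0$) and $q$ is a monomorphism with $\Coker(q)\cong C'$, arranged so that $qj$ has cokernel... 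Here the cleanest construction is: take $\beta\alpha$ to be the monomorphism $0\to K$'s partner in a sequence with cokernel in $\sfC$ and $\beta$ the one with cokernel $C'$, so that the snake lemma argument run in reverse forces $\Coker(\alpha)\cong K$. Precisely, form the pullback or use that the sequence $0\to K\to C\to C'\to 0$ together with $C,C'\in\sfC$ gives maps $(0\to C)\in\Mon(\sfC)$ and $(0\to K\text{-extension})$; applying left cancellation to a composite whose total cokernel is $C$ and whose second factor has cokernel $C'$ extracts the first factor as a monomorphism with cokernel $K$.

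\emph{Main obstacle.} The first implication is routine once the snake lemma is invoked. The delicate point is the reverse implication: one must manufacture, from a single short exact sequence $0\to K\to C\to C'\to 0$ with $C,C'\in\sfC$, a composable pair $\beta\alpha,\beta\in\Mon(\sfC)$ whose left cancellation outputs precisely "$K\in\sfC$". The natural candidate is to take $\alpha\colon 0\to K$ viewed inside a larger object and $\beta$ a map with cokernel $C'$, chosen so that $\beta\alpha$ has cokernel $C$; realizing this requires building an object $E$ fitting into $0\to K\to E\to C'\to 0$ compatibly — but the given sequence itself, with $E=C$, does exactly this if we instead read it as: $\Coker(0\to K)=K$, and the inclusion $K\hookrightarrow C$ composed appropriately. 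I expect the proof to use that $(\sfC,\sfF)$-style maps $0\to C$ and $0\to C'$ are in $\Mon(\sfC)$ and that the map $0\to K$ factors as $0\to C \to$ (something), forcing $K\in\sfC$ by cancellation; pinning down this factorization cleanly is the crux.
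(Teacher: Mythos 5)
Your first implication is correct and is exactly the paper's argument: the Snake Lemma applied to the two rows linked by $\id_X$ and $\beta$ produces the short exact sequence $0\to\Coker(\alpha)\to\Coker(\beta\alpha)\to\Coker(\beta)\to 0$, and closure of $\sfC$ under kernels of epimorphisms finishes it.

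The reverse implication, however, is not a proof: you circle the right idea several times ("take $\beta\alpha$ to be\ldots", "pinning down this factorization cleanly is the crux") but never actually produce a composable pair to which left cancellation applies, and the one concrete factorization you do write down, ``$0\to K$ factors as $0\to C\to(\text{something})$,'' has the arrows in the wrong order. The missing step is a one-liner. Given $0\to K\xra{\iota}C\to C'\to 0$ with $C,C'\in\sfC$, take $\alpha\colon 0\to K$ (the map from the zero object) and $\beta=\iota\colon K\to C$. Then $\beta\alpha$ is the map $0\to C$, a monomorphism with cokernel $C\in\sfC$, so $\beta\alpha\in\Mon(\sfC)$; and $\beta=\iota$ is a monomorphism with cokernel $C'\in\sfC$, so $\beta\in\Mon(\sfC)$. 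Left cancellation now forces $\alpha=(0\to K)\in\Mon(\sfC)$, and $\Coker(0\to K)\cong K$, whence $K\in\sfC$. No pullbacks, auxiliary extensions, or reverse snake-lemma arguments are needed; the whole point is that membership of an object $M$ in $\sfC$ is equivalent to membership of the morphism $0\to M$ in $\Mon(\sfC)$, which converts the object-level statement \eqclbl{i} directly into an instance of the morphism-level statement \eqclbl{ii}.
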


\begin{prf*}
\eqclbl{i}$\implies$\eqclbl{ii}. Let $\alpha: X \to Y$ and $\beta: Y \to Z$ be morphisms in $\sfA$ such that both $\beta\alpha$ and $\beta$ are in $\Mon(\sfC)$.
In particular, $\beta\alpha$ is a monomorphism, so is $\alpha$.
Hence, we obtain a commutative diagram
\begin{equation*}
\xymatrix{
  0 \ar[r] & X \ar@{=}[d]\ar[r]^{\alpha} & Y \ar[d]^{\beta}\ar[r] & \Coker(\alpha) \ar[d]\ar[r] & 0 \\
  0 \ar[r] & X \ar[r]^{\beta\alpha}      & Z \ar[r]  & \Coker(\beta\alpha) \ar[r]  & 0}
\end{equation*}
of short exact sequences, which induces a short exact sequence
\[
0 \to \Coker(\alpha) \to \Coker(\beta\alpha) \to \Coker(\beta) \to 0
\]
by the Snake Lemma as $\beta$ is a monomorphism. Note that $\sfC$ is closed under the kernels of epimorphisms by assumption, and both $\Coker(\beta\alpha)$ and $\Coker(\beta)$ are in $\sfC$. It follows that $\Coker(\alpha)$ is in $\sfC$ as well. Thus, $\alpha$ is in $\Mon(\sfC)$.

\eqclbl{ii}$\implies$\eqclbl{i}. Take a short exact sequence $0 \to X \xra{\beta} C' \to C \to 0$ in $\sfA$ with $C'$ and $C$ in $\sfC$. Let $\alpha$ be the zero morphism from $0$ to $X$. Then $\beta\alpha$ is in $\Mon(\sfC)$. Note that $\beta$ is in $\Mon(\sfC)$ clearly.
It follows that $\alpha$ is in $\Mon(\sfC)$ as well. Hence, $X$ is contained in $\sfC$.
\end{prf*}

The following result can be proved dually.

\begin{lem}\label{coresolving}
Let $\sfF$ be a class of objects in $\sfA$. Then the following are equivalent.
\begin{eqc}
\item $\sfF$ is closed under cokernels of monomorphisms.

\item $\Epi(\sfF)$ satisfies the right cancellation property.
\end{eqc}
\end{lem}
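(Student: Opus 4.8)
The plan is to obtain Lemma~\ref{coresolving} by dualizing the proof of Lemma~\ref{resolving}, i.e.\ by passing to the opposite abelian category, under which $\Mon(\sfC)$ turns into $\Epi(\sfF)$, cokernels turn into kernels, the left cancellation property turns into the right cancellation property, and ``closed under kernels of epimorphisms'' turns into ``closed under cokernels of monomorphisms''. Concretely, I would argue as follows, handling the two implications separately.

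For \eqclbl{i}$\implies$\eqclbl{ii}: assume $\sfF$ is closed under cokernels of monomorphisms and take morphisms $\alpha\colon X\to Y$ and $\beta\colon Y\to Z$ with $\beta\alpha\in\Epi(\sfF)$ and $\alpha\in\Epi(\sfF)$. Since $\beta\alpha$ is an epimorphism, so is $\beta$, and restricting $\alpha$ yields a morphism $\Ker(\beta\alpha)\to\Ker(\beta)$ fitting into a commutative diagram of short exact sequences
\[
\xymatrix{
0 \ar[r] & \Ker(\beta\alpha) \ar[d] \ar[r] & X \ar[d]^{\alpha} \ar[r]^-{\beta\alpha} & Z \ar@{=}[d] \ar[r] & 0 \\
0 \ar[r] & \Ker(\beta) \ar[r] & Y \ar[r]^-{\beta} & Z \ar[r] & 0.
}
\]
Applying the Snake Lemma, and using that $\alpha$ is an epimorphism so that $\Coker(\alpha)=0$, I would extract a short exact sequence $0\to\Ker(\alpha)\to\Ker(\beta\alpha)\to\Ker(\beta)\to 0$. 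Since $\Ker(\alpha)$ and $\Ker(\beta\alpha)$ lie in $\sfF$ and $\sfF$ is closed under cokernels of monomorphisms, it follows that $\Ker(\beta)\in\sfF$, i.e.\ $\beta\in\Epi(\sfF)$.

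For \eqclbl{ii}$\implies$\eqclbl{i}: let $0\to A\xra{m}B\xra{\alpha}Y\to 0$ be any short exact sequence in $\sfA$ with $A,B\in\sfF$; one must show $Y\in\sfF$. Let $\beta\colon Y\to 0$ be the zero morphism. Then $\alpha\in\Epi(\sfF)$, since $\alpha$ is an epimorphism with $\Ker(\alpha)\cong A\in\sfF$, and $\beta\alpha\colon B\to 0$ is an epimorphism with kernel $B\in\sfF$, so $\beta\alpha\in\Epi(\sfF)$. The right cancellation property then gives $\beta\in\Epi(\sfF)$, that is, $Y=\Ker(\beta)\in\sfF$; hence $\sfF$ is closed under cokernels of monomorphisms.

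I do not expect a genuine obstacle here; the only point requiring a moment's care is the bookkeeping in the Snake Lemma for \eqclbl{i}$\implies$\eqclbl{ii}: one must notice that it is the hypothesis $\alpha\in\Epi(\sfF)$ (and not $\beta\alpha\in\Epi(\sfF)$) that forces the connecting cokernel term to vanish and produces the desired three-term exact sequence, precisely dual to the role played by ``$\beta$ is a monomorphism'' in the proof of Lemma~\ref{resolving}.
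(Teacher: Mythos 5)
Your proof is correct and is exactly what the paper intends: it states Lemma~\ref{coresolving} ``can be proved dually'' to Lemma~\ref{resolving}, and your argument is precisely that dualization, with the Snake Lemma bookkeeping (using that $\alpha$ epi kills the cokernel term) carried out correctly.
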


\begin{lem}\label{test hereditary}
Let $(\sfC, \sfF)$ be a pair of classes of objects in $\sfA$ such that $(\Mon(\sfC), \Epi(\sfF))$ is a weak factorization system. Then $\Mon(\sfC)$ satisfies the left cancellation property if and only if $\Epi(\sfF)$ satisfies the right cancellation property.
\end{lem}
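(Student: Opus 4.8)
The plan is to combine Lemmas \ref{resolving} and \ref{coresolving} with Theorem \ref{cot-wfs}. Given the hypothesis that $(\Mon(\sfC), \Epi(\sfF))$ is a weak factorization system, Theorem \ref{cot-wfs} tells us that $(\sfC, \sfF)$ is a complete cotorsion pair. By Lemma \ref{resolving}, the statement ``$\Mon(\sfC)$ satisfies the left cancellation property'' is equivalent to ``$\sfC$ is closed under kernels of epimorphisms'', and by Lemma \ref{coresolving}, the statement ``$\Epi(\sfF)$ satisfies the right cancellation property'' is equivalent to ``$\sfF$ is closed under cokernels of monomorphisms''. So the whole lemma reduces to the classical fact that, for a complete (indeed, for a cotorsion pair admitting enough short exact sequences) cotorsion pair $(\sfC,\sfF)$, the class $\sfC$ is closed under kernels of epimorphisms if and only if $\sfF$ is closed under cokernels of monomorphisms; this is exactly the standard characterization of hereditary cotorsion pairs.

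First I would record the reduction above, so that it only remains to prove the equivalence of the two closure conditions for the cotorsion pair $(\sfC,\sfF)$. For the direction that $\sfC$ closed under kernels of epimorphisms implies $\sfF$ closed under cokernels of monomorphisms, I would take a short exact sequence $0 \to F' \to F \to X \to 0$ with $F', F \in \sfF$ and show $X \in \sfF$, i.e. $\mathrm{Ext}^1_{\sfA}(C, X) = 0$ for every $C \in \sfC$. Using completeness, choose a short exact sequence $0 \to C' \to P \to C \to 0$ with $P \in \sfC \cap \sfF$ (take $P$ from the special precover, noting $P\in\sfF$ because... actually one uses $0\to C'\to C''\to C\to 0$ with $C''\in\sfC$ and then dimension shift) — more cleanly, I would invoke the dimension-shifting isomorphism $\mathrm{Ext}^1_{\sfA}(C, X) \cong \mathrm{Ext}^2_{\sfA}(C, F')$ coming from the sequence $0 \to F' \to F \to X \to 0$ together with $\mathrm{Ext}^1_{\sfA}(C,F) = 0$, and then show $\mathrm{Ext}^2_{\sfA}(C, F') = 0$ by resolving $C$ one step: from $0 \to K \to C'' \to C \to 0$ with $C'' \in \sfC$ and $K \in \sfC$ (here is where closure of $\sfC$ under kernels of epimorphisms enters), we get $\mathrm{Ext}^2_{\sfA}(C, F') \cong \mathrm{Ext}^1_{\sfA}(K, F') = 0$ since $K \in \sfC$ and $F' \in \sfF = \sfC^{\bot}$. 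The reverse direction is entirely dual, swapping the roles of $\sfC$ and $\sfF$, monomorphisms and epimorphisms, kernels and cokernels, and using the other family of short exact sequences guaranteed by completeness.

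The main obstacle — really the only subtlety — is handling $\mathrm{Ext}^2$ in a general abelian category $\sfA$ that need not have enough projectives or injectives. I would phrase everything in terms of Yoneda Ext, for which dimension shifting along an arbitrary short exact sequence is still valid, so that the long exact sequence in $\mathrm{Ext}^{\bullet}_{\sfA}(C, -)$ applied to $0 \to F' \to F \to X \to 0$ and the one in $\mathrm{Ext}^{\bullet}_{\sfA}(-, F')$ applied to $0 \to K \to C'' \to C \to 0$ can be spliced. Everything else is a formal manipulation of the two exact sequences supplied by completeness and the vanishing $\mathrm{Ext}^1_{\sfA}(\sfC, \sfF) = 0$. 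Once that equivalence is in hand, the lemma follows immediately by reading it back through Lemmas \ref{resolving} and \ref{coresolving}.
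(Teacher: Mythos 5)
Your reduction is exactly the paper's: translate the two cancellation properties into ``$\sfC$ is closed under kernels of epimorphisms'' and ``$\sfF$ is closed under cokernels of monomorphisms'' via Lemmas \ref{resolving} and \ref{coresolving}, use Theorem \ref{cot-wfs} to see that $(\sfC,\sfF)$ is a complete cotorsion pair, and then invoke the classical equivalence of the two closure conditions for a complete cotorsion pair. The paper disposes of that last equivalence by citing Becker \cite[Cor.\ 1.1.12]{Be14}; you instead try to prove it, and it is there that your argument has a genuine gap. The problematic step is the claimed isomorphism $\mathrm{Ext}^2_{\sfA}(C,F')\cong \mathrm{Ext}^1_{\sfA}(K,F')$ obtained from $0\to K\to C''\to C\to 0$ with $C''\in\sfC$. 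The long exact sequence gives $\mathrm{Ext}^1(C'',F')\to\mathrm{Ext}^1(K,F')\to\mathrm{Ext}^2(C,F')\to\mathrm{Ext}^2(C'',F')$, so knowing $\mathrm{Ext}^1(C'',F')=0$ and $\mathrm{Ext}^1(K,F')=0$ only yields that $\mathrm{Ext}^2(C,F')$ \emph{injects into} $\mathrm{Ext}^2(C'',F')$; to conclude vanishing you would need $\mathrm{Ext}^2(C'',F')=0$, i.e.\ an instance of $\mathrm{Ext}^2(\sfC,\sfF)=0$, which is (equivalent to) the very statement you are proving. The argument is not rescued by working with Yoneda Ext: the issue is not the existence of long exact sequences but that completeness only supplies a middle term $C''$ lying in $\sfC$, not a projective one. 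In $\Mod$ one takes $C''$ projective and the shift really is an isomorphism; in a general abelian category $\sfA$ (which may have no projectives at all) the step is circular.

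A correct argument, in the spirit of the result you are re-proving, avoids $\mathrm{Ext}^2$ entirely. Given $0\to F'\to F\to X\to 0$ with $F',F\in\sfF$, $C\in\sfC$, and an extension $0\to X\to E\xra{p} C\to 0$, use completeness to choose $0\to F_E\to C_E\xra{\epsilon} E\to 0$ with $C_E\in\sfC$ and $F_E\in\sfF$, and set $K=\Ker{(p\epsilon)}$. Then $0\to K\to C_E\to C\to 0$ forces $K\in\sfC$ by the resolving hypothesis; and comparing the two epimorphisms $F\to X$ and $K\to X$ (the latter with kernel $F_E$) via their pullback shows $K$ is a direct summand of an extension of $F$ by $F_E$, hence $K\in\sfF$ as well. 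Since $K\in\sfC\cap\sfF$, the sequence $0\to K\to C_E\to C\to 0$ splits, and the resulting section composed with $\epsilon$ splits $p$. Either supply an argument of this type or, as the paper does, simply cite the known equivalence; as it stands, the dimension-shifting step does not go through.
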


\begin{prf*}
We only prove the ``only if" part as the ``if" part is a dual statement.

Note that $\Mon(\sfC)$ satisfies the left cancellation property. By Lemma \ref{resolving}, $\sfC$ is closed under kernels of epimorphisms. Since $(\sfC, \sfF)$ is a complete cotorsion pair by Theorem \ref{cot-wfs}, it follows from Becker \cite[Corollary 1.1.12]{Be14} that $\sfF$ is closed under the cokernels of monomorphisms. Hence, $\Epi(\sfF)$ satisfies the right cancellation property by Lemma \ref{coresolving}.
\end{prf*}

The next result can be proved in a way similar to the proof of Lemma \ref{resolving}.

\begin{lem}\label{extension}
Let $\sfC$ be a class of objects in $\sfA$. Then the following are equivalent.
\begin{eqc}
\item $\sfC$ is closed under extensions.
\item $\Mon(\sfC)$ and $\Epi(\sfC)$ are closed under compositions.
\item $\Mon(\sfC)$ or $\Epi(\sfC)$ are closed under compositions.
\end{eqc}
\end{lem}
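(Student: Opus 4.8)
The plan is to mimic the proof of Lemma~\ref{resolving} closely: prove \eqclbl{i}$\implies$\eqclbl{ii}, note that \eqclbl{ii}$\implies$\eqclbl{iii} is trivial, and then establish \eqclbl{iii}$\implies$\eqclbl{i} via the zero-object trick already used in Lemma~\ref{resolving}. For \eqclbl{i}$\implies$\eqclbl{ii}, let $\alpha\colon X\to Y$ and $\beta\colon Y\to Z$ both lie in $\Mon(\sfC)$; then $\beta\alpha$ is again a monomorphism, and forming the commutative diagram whose two rows are the canonical cokernel presentations of $\alpha$ and of $\beta\alpha$ (with the identity on $X$ and the map $\beta$ in the middle), the Snake Lemma---using that $\beta$ is a monomorphism---produces a short exact sequence $0\to\Coker(\alpha)\to\Coker(\beta\alpha)\to\Coker(\beta)\to 0$. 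Since $\sfC$ is closed under extensions and its two outer terms lie in $\sfC$, so does $\Coker(\beta\alpha)$, i.e.\ $\beta\alpha\in\Mon(\sfC)$. The dual argument, using the canonical kernel presentations and that $\alpha$ is an epimorphism, yields a short exact sequence $0\to\Ker(\alpha)\to\Ker(\beta\alpha)\to\Ker(\beta)\to 0$ and shows that $\Epi(\sfC)$ is closed under composition.

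The implication \eqclbl{ii}$\implies$\eqclbl{iii} is immediate. For \eqclbl{iii}$\implies$\eqclbl{i}, suppose first that $\Mon(\sfC)$ is closed under composition, and let $0\to M'\xrightarrow{f}M\xrightarrow{g}M''\to 0$ be a short exact sequence in $\sfA$ with $M', M''\in\sfC$. Then the morphism $0\to M'$ lies in $\Mon(\sfC)$, since its cokernel is $M'$, and $f$ lies in $\Mon(\sfC)$, since its cokernel is $M''$; hence the composite $0\to M$ lies in $\Mon(\sfC)$, and as its cokernel is $M$ we conclude $M\in\sfC$. Dually, if $\Epi(\sfC)$ is closed under composition, then $g\colon M\to M''$ lies in $\Epi(\sfC)$ (its kernel is $M'$) and $M''\to 0$ lies in $\Epi(\sfC)$ (its kernel is $M''$), so the composite $M\to 0$ lies in $\Epi(\sfC)$, whence $M\in\sfC$. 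In either case $\sfC$ is closed under extensions.

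There is no essential obstacle here: the argument is elementary and parallels Lemma~\ref{resolving} and its dual. The only point requiring a little care is the exactness of the Snake Lemma sequence at its outer end in \eqclbl{i}$\implies$\eqclbl{ii}---this is precisely where the monomorphism hypothesis on $\beta$ (resp.\ the epimorphism hypothesis on $\alpha$) is used, exactly as in the proof of Lemma~\ref{resolving}.
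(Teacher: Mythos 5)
Your proof is correct and follows exactly the route the paper intends: the paper only remarks that Lemma~\ref{extension} ``can be proved in a way similar to the proof of Lemma~\ref{resolving}'', and your argument is precisely that adaptation --- the Snake Lemma giving the short exact sequence of cokernels (resp.\ kernels) for \proofofimp{i}{ii}, and the zero-object trick for \proofofimp{iii}{i} in both the $\Mon(\sfC)$ and $\Epi(\sfC)$ cases. No gaps.
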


\begin{bfhpg}[\bf Proof of Theorem \ref{thmA}]
The equivalences follow immediately from Theorem \ref{cot-wfs} and Lemmas \ref{resolving}, \ref{coresolving}, \ref{test hereditary} and \ref{extension}.
\qed\end{bfhpg}

We then introduce the compatible condition for weak factorization systems.

\begin{dfn}\label{compatibledef}
Two weak factorization systems $(\calC, \bcalF)$ and $(\bcalC, \calF)$ in $\sfE$ are called {\it compatible} if the following conditions hold:
\begin{prt}
\item[(CP1)] $\bcalC\subseteq\calC$ (or equivalently, $\bcalF\subseteq\calF$);

\item[(CP2)] given composable morphisms $\alpha$ and $\beta$ in $\calF$, if two of the three morphisms $\alpha$, $\beta$ and $\beta\alpha$ are in $\bcalF$, then so is the third one;

\item[(CP3)] given $c \in \bcalC$ and $f \in \calF$, if $fc \in \bcalC$, then $f \in \bcalF$.
\end{prt}
\end{dfn}

We mention that the condition (CP3) was called the span property by Sattler in \cite[Definition 2.3]{sa17}.

Let $(\calC, \bcalF)$ and $(\bcalC, \calF)$ be two weak factorization systems in $\sfE$. Define
\[
\calW_{\bcalC,\bcalF} = \{\alpha \mid \alpha \text{ can be decomposed as } \alpha = \widetilde{f}\widetilde{c} \text{ with } \widetilde{c} \in \bcalC \text{ and } \widetilde{f} \in \bcalF\}.
\]
The following result, first proved by Sattler \cite[Lemma 2.1]{sa17}, will be used frequently in the sequel. For the convenience of the reader, we include a detailed proof.

\begin{lem}\label{weclass}
Let $(\calC, \bcalF)$ and $(\bcalC, \calF)$ be two weak factorization systems in $\sfE$ satisfying the condition $\mathrm{(CP1)}$. Then $\bcalC = \calC \cap \calW_{\bcalC,\bcalF}$ and $\bcalF = \calF \cap \calW_{\bcalC,\bcalF}$.
\end{lem}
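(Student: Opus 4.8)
The plan is to prove the two identities separately, deriving $\bcalF = \calF \cap \calW_{\bcalC,\bcalF}$ from $\bcalC = \calC \cap \calW_{\bcalC,\bcalF}$ by the evident left--right duality (interchanging $\calC \leftrightarrow \calF$, $\bcalC \leftrightarrow \bcalF$, and left lifting $\leftrightarrow$ right lifting). For the inclusion $\bcalC \subseteq \calC \cap \calW_{\bcalC,\bcalF}$ I would note that $\bcalC \subseteq \calC$ is precisely the hypothesis $\mathrm{(CP1)}$, while for any $\alpha \in \bcalC$ the trivial factorization $\alpha = \id \circ \alpha$ has $\alpha \in \bcalC$ and $\id \in \bcalF$ --- identities lie in both classes of a weak factorization system by Remark~\ref{rmk1.3}(a) --- so $\alpha \in \calW_{\bcalC,\bcalF}$. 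Hence this inclusion is formal.

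The content is in the reverse inclusion $\calC \cap \calW_{\bcalC,\bcalF} \subseteq \bcalC$, which I would obtain by the standard retract argument. Given $\alpha \colon X \to Y$ in $\calC \cap \calW_{\bcalC,\bcalF}$, fix a factorization $\alpha = \widetilde f \widetilde c$ with $\widetilde c \colon X \to Z$ in $\bcalC$ and $\widetilde f \colon Z \to Y$ in $\bcalF$. Since $(\calC,\bcalF)$ is a weak factorization system, $\alpha \in \calC = {}^\Box\bcalF$ has the left lifting property against $\widetilde f \in \bcalF$; applying it to the square with top edge $\widetilde c$, left edge $\alpha$, right edge $\widetilde f$ and bottom edge $\id_Y$ (which commutes because $\widetilde f\widetilde c = \alpha$), I get $t \colon Y \to Z$ with $t\alpha = \widetilde c$ and $\widetilde f t = \id_Y$. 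Together with $\widetilde f\widetilde c = \alpha$ these equations assemble into a retract diagram presenting $\alpha$ as a retract of $\widetilde c \in \bcalC$, and since $\bcalC$ is closed under retracts (Remark~\ref{rmk1.3}(a)) we conclude $\alpha \in \bcalC$.

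For $\bcalF = \calF \cap \calW_{\bcalC,\bcalF}$ the argument is dual: $\bcalF \subseteq \calF$ is the equivalent form of $\mathrm{(CP1)}$, $\beta = \beta \circ \id$ gives $\bcalF \subseteq \calW_{\bcalC,\bcalF}$, and for $\beta \in \calF \cap \calW_{\bcalC,\bcalF}$ with $\beta = \widetilde f\widetilde c$ (where $\widetilde c \in \bcalC$, $\widetilde f \in \bcalF$) one uses that $\widetilde c \in \bcalC$ has the left lifting property against $\beta \in \calF = \bcalC^\Box$ to produce a lift $s$ with $s\widetilde c = \id$ and $\beta s = \widetilde f$, realizing $\beta$ as a retract of $\widetilde f \in \bcalF$ and again invoking closure under retracts. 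I do not expect a genuine obstacle; the only delicate point is the bookkeeping --- choosing the commuting square so that the lift furnishes exactly the morphisms needed to build the retract diagram, and keeping straight that one weak factorization system supplies the lifting property while the other supplies retract-closure of the class in question.
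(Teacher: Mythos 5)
Your proof is correct, and it differs from the paper's only in the finishing move. Both arguments hinge on the same key lift: applying the left lifting property of $\alpha \in \calC = {}^\Box\bcalF$ against $\widetilde f \in \bcalF$ to the square with top edge $\widetilde c$, right edge $\widetilde f$ and bottom edge $\id_Y$, which yields $t$ with $t\alpha = \widetilde c$ and $\widetilde f t = \id_Y$. From there you conclude via the retract argument --- the pairs $(\id_X, t)$ and $(\id_X, \widetilde f)$ exhibit $\alpha$ as a retract of $\widetilde c$ in the arrow category, and $\bcalC$ is retract-closed by Remark~\ref{rmk1.3}(a) --- whereas the paper instead takes an arbitrary lifting problem of $\alpha$ against $f \in \calF$, solves it for $\widetilde c$ (using $\widetilde c \in \bcalC = {}^\Box\calF$) to get a lift $h$, and checks that $ht$ solves the original problem, concluding $\alpha \in {}^\Box\calF = \bcalC$. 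Your route is slightly shorter and leans on the closure property of the classes; the paper's is marginally more self-contained in that it verifies the defining lifting property directly. Your bookkeeping for the dual identity (realizing $\beta$ as a retract of $\widetilde f$ via the section $s$ of $\widetilde c$) is also correct, and matches the paper's decision to treat the second equality by duality.
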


\begin{prf*}
We only prove the first equality as the second one can be proved similarly.

It is clear that $\bcalC \subseteq \calC \cap \calW_{\bcalC,\bcalF}$,
so it suffices to show the inclusion of the other direction. Take a morphism $\alpha: X\to Y$ in $\calC \cap \calW_{\bcalC,\bcalF}$. For a morphism $f: A \to B$ in $\calF$ and two morphisms $\lambda: X \to A$ and $\mu: Y\to B$ such that $f \lambda = \mu \alpha$, since $\alpha$ is in $\calW_{\bcalC,\bcalF}$, we can find a morphism $\widetilde{c}: X \to C$ in $\bcalC$ and a morphism $\widetilde{f}: C\to Y$ in $\bcalF$ such that $\alpha = \widetilde{f}\widetilde{c}$ as shown in the following commutative diagram:

\[
\xymatrix@R=0.5cm{
X \ar[dd]_{\alpha}\ar[rr]^{\lambda} \ar[dr]_{\widetilde{c}} && A \ar[dd]^{f}\\
                & C \ar[dl]_{\widetilde{f}} \\
Y \ar[rr]_{\mu}                                             && B.}
\]
Since $\widetilde{c}$ has the left lifting property with respect to $f$, one can find a morphism $h: C \to A$ such that the following diagram commutes:
\[
\xymatrix@R=0.5cm{
X \ar[dd]_{\alpha}\ar[rr]^{\lambda} \ar[dr]_{\widetilde{c}} && A \ar[dd]^{f}\\
                & C \ar[dl]_{\widetilde{f}} \ar[ur]^h \\
Y \ar[rr]_{\mu}                                             && B}
\]
in which the left triangle gives rise to the following commutative square:
$$\xymatrix{
  X \ar[d]_{\alpha} \ar[r]^{\widetilde{c}} & C \ar[d]^{\widetilde{f}} \\
  Y \ar@{=}[r]                             & Y.}$$
Since $\alpha$ has the left lifting property with respect to $\widetilde{f}$, there is a morphism $h': Y \to C$ such that $h'\alpha = \widetilde{c}$ and $\widetilde{f} h' = \id_Y$. Thus, $hh'\alpha = h \widetilde{c} = \lambda$ and $fhh' = \mu \widetilde{f} h' = \mu$, so $\alpha$ has the left lifting property with respect to $f$, and hence, belongs to $^\Box\calF = \bcalC$. Consequently, one has $\calC \cap \calW_{\bcalC,\bcalF} \subseteq \bcalC$.
\end{prf*}

\begin{lem}\label{compatible}
Let $(\calC, \bcalF)$ and $(\bcalC, \calF)$ be two compatible weak factorization systems in $\sfE$. Suppose that the composite $g=fh$ is contained in $\calW_{\bcalC,\bcalF}$. Then one has:
\begin{prt}
\item if $\sfE$ has pullbacks along morphisms in $\bcalF$ and $f\in\bcalF$, then $h\in\calW_{\bcalC,\bcalF}$;

\item if $\sfE$ has pushouts along morphisms in $\bcalC$ and $h \in \bcalC$, then $f\in\calW_{\bcalC,\bcalF}$.
\end{prt}
\end{lem}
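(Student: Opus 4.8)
The plan is to handle the two parts separately, in each case by \emph{constructing} an explicit decomposition of the target morphism as an element of $\bcalC$ followed by an element of $\bcalF$. Both arguments begin the same way: since $g=fh\in\calW_{\bcalC,\bcalF}$, I fix a factorization $g=\widetilde{f}\,\widetilde{c}$ with $\widetilde{c}\in\bcalC$ and $\widetilde{f}\in\bcalF$. I will use throughout that $\calC,\bcalC,\calF,\bcalF$ are closed under composition and that $\bcalF\subseteq\calF$ (Remark~\ref{rmk1.3}(a) and (CP1)), and that when the ambient (co)completeness is available $\bcalF$ is closed under pullbacks and $\bcalC$ under pushouts (Remark~\ref{rmk1.3}(b),(c)). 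From there the two arguments genuinely differ, since the conditions in Definition~\ref{compatibledef} are not self-dual.

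For (a): I form the pullback $P$ of the cospan $Y\xrightarrow{f}Z\xleftarrow{\widetilde{f}}W$ --- which exists because $\widetilde{f}\in\bcalF$ and $\sfE$ has pullbacks along $\bcalF$ --- with projections $\pi_Y\colon P\to Y$ (a pullback of $\widetilde{f}$) and $\pi_W\colon P\to W$ (a pullback of $f$); both lie in $\bcalF$, the latter because $f\in\bcalF$. The relation $fh=\widetilde{f}\,\widetilde{c}$ gives a map $u\colon X\to P$ with $\pi_Yu=h$ and $\pi_Wu=\widetilde{c}$. I then factor $u=p_uc_u$ through the weak factorization system $(\bcalC,\calF)$, so $c_u\in\bcalC$, $p_u\in\calF$. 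Now $\pi_Wp_u\in\calF$ and $(\pi_Wp_u)c_u=\widetilde{c}\in\bcalC$, so (CP3) yields $\pi_Wp_u\in\bcalF$. Setting $p=\pi_Yp_u$, one has $h=pc_u$, and $fp=f\pi_Yp_u=\widetilde{f}(\pi_Wp_u)\in\bcalF$; since $p,f,fp$ are composable morphisms of $\calF$ and $f,fp\in\bcalF$, (CP2) forces $p\in\bcalF$. Hence $h=pc_u$ with $c_u\in\bcalC$, $p\in\bcalF$, i.e.\ $h\in\calW_{\bcalC,\bcalF}$.

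For (b): here $h\in\bcalC={}^{\Box}\calF$, so the commutative square $fh=\widetilde{f}\,\widetilde{c}$ (with $h$ as left edge and $\widetilde{f}$ as right edge, using $\widetilde{f}\in\calF$) has a diagonal filler $\psi\colon Y\to W$ with $\psi h=\widetilde{c}$ and $\widetilde{f}\psi=f$. I form the pushout $D$ of the span $Y\xleftarrow{h}X\xrightarrow{\widetilde{c}}W$ --- which exists because $h\in\bcalC$ and $\sfE$ has pushouts along $\bcalC$ --- with structure maps $j_Y,j_W$, both in $\bcalC$. The cocone $(\psi,\id_W)$ induces $\rho\colon D\to W$ with $\rho j_Y=\psi$ and $\rho j_W=\id_W$. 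Factoring $\rho=f_\rho c_\rho$ through $(\bcalC,\calF)$ gives $c_\rho j_W\in\bcalC$, $f_\rho\in\calF$ and $f_\rho(c_\rho j_W)=\rho j_W=\id_W\in\bcalC$, so (CP3) gives $f_\rho\in\bcalF$. Then $f=\widetilde{f}\psi=(\widetilde{f}f_\rho)(c_\rho j_Y)$ with $c_\rho j_Y\in\bcalC$ and $\widetilde{f}f_\rho\in\bcalF$, i.e.\ $f\in\calW_{\bcalC,\bcalF}$.

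I expect the main obstacle to be conceptual rather than computational. The naive idea --- compare the two factorizations of $g$ by diagonal fillers and hope the resulting endomorphisms are identities --- fails, because factorizations through a \emph{weak} factorization system are unique only up to a retract that need not be invertible. What makes the proof go through is to produce a composite lying \emph{honestly} in $\bcalC$ (rather than merely in $\calW_{\bcalC,\bcalF}$) and then feed it to the span condition (CP3): in (a) this composite is $\widetilde{c}$ itself, recovered after pulling the factorization of $g$ back; in (b) it is a composite that collapses to $\id_W$ thanks to the filler $\psi$ and the universal property of the pushout. A further point requiring attention is that the pullback in (a) and the pushout in (b) must exist although $\sfE$ is only assumed to have pullbacks along $\bcalF$ (resp. pushouts along $\bcalC$); this is exactly why the leg $\widetilde{f}\in\bcalF$ (resp. $\widetilde{c}\in\bcalC$) of the chosen factorization of $g$ is essential.
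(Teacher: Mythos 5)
Your proof is correct. Part (a) is essentially the paper's own argument: pull the factorization $g=\widetilde{f}\widetilde{c}$ back along $f$, factor the induced map into the pullback through $(\bcalC,\calF)$, and use (CP3) followed by (CP2) to upgrade the $\calF$-part to $\bcalF$; apart from notation and the exact order in which (CP2) is invoked, the two proofs coincide. Part (b) is where you genuinely diverge, and to your advantage: the paper disposes of it with ``the second one can be proved dually,'' but the compatibility axioms are not self-dual --- a literal dualization of (a) runs into needing the dual of (CP3) (``$c\in\calC$, $f\in\bcalF$, $fc\in\bcalF$ $\Rightarrow$ $c\in\bcalC$''), which is not among the stated conditions. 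Your argument sidesteps this: you use $h\in\bcalC={}^{\Box}\calF$ to produce the filler $\psi$ with $\psi h=\widetilde{c}$ and $\widetilde{f}\psi=f$, push out the span $(h,\widetilde{c})$, and exploit the retraction $\rho$ with $\rho j_W=\id_W$ so that (CP3) itself (applied to $f_\rho(c_\rho j_W)=\id_W\in\bcalC$) does the work. This is a complete proof from the hypotheses of (b) alone, whereas recovering the dual span property from the paper's setup would either import the hypotheses of (a) or require an extra argument. Your closing remarks on why the naive ``compare the two factorizations by lifting'' idea fails are also accurate.
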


\begin{prf*}
We only prove the first statement as the second one can be proved dually.

Write $h: X\to Z$ and $f: Z \to Y$. Note that $g = fh$ is contained in $\calW_{\bcalC,\bcalF}$. By definition, there is a morphism $\widetilde{c}: X\to Z'$ in $\bcalC$ and a morphism $\widetilde{f}: Z'\to Y$ in $\bcalF$ such that the following diagram commutes:
\[
\xymatrix{
X \ar[r]^{\widetilde{c}} \ar[d]_h & Z' \ar[d]^{\widetilde{f}} \\
Z \ar[r]_{f}                                                     & Y.}
\]
Now consider the following pullback diagram:
\[
\xymatrix{
P  \ar@{}[rd]|<<{\ulcorner} \ar[d]_{\widetilde{f_2}} \ar[r]^{\widetilde{f_1}} & Z' \ar[d]^{\widetilde{f}} \\
Z \ar[r]_{f}                                                     & Y.}
\]
Since $f$ is in $\bcalF$ by assumption,
one has that $\widetilde{f_1}$ is in $\bcalF$ as well; see Remark \ref{rmk1.3}(c).
Similarly, note that $\widetilde{f}$ is in $\bcalF$,
it follows that $\widetilde{f_2}$ is also in $\bcalF$.
By the universal property of pullbacks, there is a morphism $\alpha: X \to P$ such that $\widetilde{f_1}\alpha = \widetilde{c}$ and $\widetilde{f_2}\alpha = h$. Note that $(\bcalC, \calF)$ is a weak factorization system. There exist a morphism $\widetilde{c}': X \to P'$ in $\bcalC$ and a morphism $f': P' \to P$ in $\calF$ such that $\alpha = f'\widetilde{c}'$. Putting these pieces of information together, we obtain the following commutative diagram:
\[
\xymatrix@R=0.5cm@C=1.5cm{
                                && Z' \ar[dr]^{\widetilde{f}}\\
X \ar[urr]^{\widetilde{c}}\ar[drr]_{h}\ar[r]|-{\widetilde{c}'} & P' \ar[r]|-{f'}
                                 & P  \ar[d]^{\widetilde{f_2}} \ar[u]_{\widetilde{f_1}} & Y.\\
                                && Z  \ar[ur]_{f}}
\]
Since $\widetilde{f_1}f'\widetilde{c}' = \widetilde{c}$ is contained in $\bcalC$ and $\widetilde{f_1}\in\bcalF\subseteq\calF$ by the condition (CP1), one deduces that $\widetilde{f_1}f'$ belongs to $\bcalF$ by the condition (CP3). Hence, $f' \in \bcalF$ by the condition (CP2). Consequently, $h = (\widetilde{f_2} f') \widetilde{c}'$ is contained in $\calW_{\bcalC, \bcalF}$.
\end{prf*}

\begin{bfhpg}[\bf Proof of Theorem \ref{thmB}]
\eqclbl{i}$\implies$\eqclbl{ii}. By Theorem \ref{cot-wfs}, $(\Mon(\sfC), \Epi(\bsfF))$ and $(\Mon(\bsfC), \Epi(\sfF))$ are two weak factorization systems. It remains to show that they are compatible. By \cite[Theorem 1.1]{G15}, there is a class $\sfW$ of objects in $\sfA$ satisfying the 2-out-of-3 property\footnote{Explicitly, if $0 \to M' \to M \to M'' \to 0$ is a short exact sequence in $\sfA$, then the condition that two of the three objects $M'$, $M$ and $M''$ are in $\sfW$ implies that the third one is in $\sfW$.} such that $\bsfC = \sfC \cap \sfW$ and $\bsfF = \sfF \cap \sfW$.

The condition (CP1) holds clearly. Let $\alpha: X \to Y$ and $\beta: Y \to Z$ be two morphisms in $\Epi(\sfF)$. Then $\beta\alpha$ is in $\Epi(\sfF)$ by Lemma \ref{extension}.
Since $\beta\alpha$ is an epimorphism, we obtain the commutative diagram
\[
\xymatrix{
  0 \ar[r] & \Ker(\alpha) \ar[d]\ar[r] & X \ar@{=}[d]\ar[r]^{\alpha} & Y \ar[d]^{\beta}\ar[r] & 0 \\
  0 \ar[r] & \Ker(\beta\alpha) \ar[r]  & X \ar[r]^{\beta\alpha}      & Z \ar[r]
  & 0}
\]
of short exact sequences, which gives rise to another short exact sequence
\[
0 \to \Ker(\alpha) \to \Ker(\beta\alpha) \to \Ker(\beta) \to 0.
\]
Thus, the condition (CP2) holds as $\sfW$ satisfies the 2-out-of-3 property.

To prove the condition (CP3), let $c: A\to B$ be in $\Mon(\bsfC)$ and $f: B\to C$ be in $\Epi(\sfF)$ such that $g = fc$ is in $\Mon(\bsfC)$.
Then $c$ and $g$ are monomorphisms with $\Coker(c)$ and $\Coker(g)$ in $\bsfC$, and $f$ is an epimorphism with $\Ker(f)$ in $\sfF$.
We want to show that $f$ is in $\Epi(\bsfF)$. It suffices to show that
$\Ker (f)$ is in $\bsfF$.
Indeed, note that the commutative diagram
\[
\xymatrix{
  0 \ar[r] & A \ar@{=}[d]\ar[r]^{c} & B \ar[d]^{f}\ar[r] & \Coker(c) \ar[d]\ar[r] & 0 \\
  0 \ar[r] & A \ar[r]^{g}           & C \ar[r]           & \Coker(g) \ar[r]       & 0}
\]
of short exact sequences induces another short exact sequence
\[
0 \to \Ker (f) \to \Coker (c) \to \Coker (g) \to 0.
\]
Since both $\Coker (c)$ and $\Coker (g)$ are in $\sfW$ as $\bsfC \subseteq \sfW$ and $\sfW$ satisfies the 2-out-of-3 property, it follows that $\Ker (f)$ belongs to $\sfW$ as well. Thus, $\Ker (f)$ is in $\bsfF$ as $\bsfF = \sfF \cap \sfW$.

\eqclbl{ii}$\implies$\eqclbl{i}. By Theorem \ref{cot-wfs}, $(\sfC, \bsfF)$ and $(\bsfC, \sfF)$ are two complete cotorsion pairs in $\sfA$. If an object $X$ is contained in $\bsfC$, then the morphism $0 \to X$ is contained in $\Mon(\bsfC)$, and hence in $\Mon(\sfC)$, so $X \in \sfC$, which yields that $\bsfC \subseteq \sfC$. To complete the proof that $(\bsfC, \sfF)$ and $(\sfC, \bsfF)$ are compatible,
it remains to show that $\sfC \cap \bsfF = \bsfC \cap \sfF$.

Now we prove that $\bsfC \cap \sfF \subseteq \sfC \cap \bsfF$. Let $M$ be an object in $\bsfC \cap \sfF$. Then it is clear that $0 \to M$ is in $\Mon(\bsfC)$ and $M\to 0$ is in $\Epi(\sfF)$. Since $(\Mon(\bsfC), \Epi(\sfF))$ and $(\Mon(\sfC), \Epi(\bsfF))$ are two compatible weak factorization systems in $\sfA$, it follows from the condition (CP3) that $M\to 0$ is in $\Epi(\bsfF)$ as $0\to 0$ is in $\Mon(\bsfC)$. Therefore, $M$ is contained in $\bsfF$. But $M$ is also contained in $\sfC$ as $\bsfC \subseteq \sfC$, so it belongs to $\sfC \cap \bsfF$.
Thus, $\bsfC \cap \sfF \subseteq \sfC \cap \bsfF$.

Next, we show that $\sfC \cap \bsfF \subseteq \bsfC \cap \sfF$. Let $N$ be an object in $\sfC \cap \bsfF$. Then $0 \to N$ is contained in $\Mon(\sfC)$ and $N \to 0$ is contained in $\Epi(\bsfF)$. It follows from Lemma \ref{compatible} that $0 \to N$ is in $\calW_{\Mon(\bsfC), \Epi(\bsfF)}$ as $0 \to 0$ belongs to $\calW_{\Mon(\bsfC), \Epi(\bsfF)}$ and $N \to 0$ belongs to $\Epi(\bsfF)$. Consequently, $0 \to N$ is contained in $\Mon(\bsfC)$ as $\calW_{\Mon(\bsfC), \Epi(\bsfF)} \cap \Mon(\sfC) = \Mon(\bsfC)$ by Lemma \ref{weclass}, so $N \in \bsfC$. On the other hand, note that $N \in \sfF$ as
$\bsfF = \sfC^{\perp} \subseteq \bsfC^{\perp} = \sfF$.
It follows that $N \in \bsfC \cap \sfF$.
Therefore, $\bsfC \cap \sfF \subseteq \sfC \cap \bsfF$.
\qed\end{bfhpg}

We end this section with the following result, which shows particularly that the Frobenius property automatically holds for weak factorization systems induced by complete cotorsion pairs, that is, if $(\sfC, \sfF)$ is a complete cotorsion pair in $\sfA$, then the weak factorization system $(\Mon(\sfC), \Epi(\sfF))$ satisfies the Frobenius property.

\begin{prp}\label{Frobenius property}
Let $\sfC$ be a class of objects in $\sfA$. Then one has:
\begin{prt}
\item the morphisms in $\Mon(\sfC)$ are preserved under pullbacks along epimorphisms;
\item the morphisms in $\Epi(\sfC)$ are preserved under pushouts along monomorphisms.
\end{prt}
\end{prp}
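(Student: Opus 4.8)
The plan is to prove part~(a) in full and to deduce part~(b) by duality. Passing to the opposite abelian category $\sfA\op$ interchanges monomorphisms with epimorphisms, kernels with cokernels, and pullbacks along epimorphisms with pushouts along monomorphisms; under this dictionary $\Mon(\sfC)$ formed in $\sfA\op$ is exactly $\Epi(\sfC)$ formed in $\sfA$, so assertion~(b) for $\sfA$ is precisely assertion~(a) for $\sfA\op$. Hence only~(a) needs an argument.

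For~(a), fix $\alpha\colon X\to Y$ in $\Mon(\sfC)$ and an epimorphism $g\colon Y'\to Y$, and form the pullback
\[
\xymatrix@R=0.5cm{
P \ar@{}[rd]|<<{\ulcorner} \ar[d]_{\alpha'} \ar[r]^{g'} & X \ar[d]^{\alpha}\\
Y' \ar[r]_{g} & Y,}
\]
so that $\alpha'\colon P\to Y'$ is the base change of $\alpha$ along $g$; we want $\alpha'\in\Mon(\sfC)$. That $\alpha'$ is a monomorphism is immediate, as pullbacks of monomorphisms are monomorphisms. The real content is the isomorphism $\Coker{(\alpha')}\cong\Coker{(\alpha)}$. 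Since $\alpha$ and $\alpha'$ are monic, there is a commutative diagram of short exact sequences
\[
\xymatrix@R=0.5cm{
0 \ar[r] & P \ar[r]^{\alpha'} \ar[d]_{g'} & Y' \ar[r] \ar[d]^{g} & \Coker{(\alpha')} \ar[r] \ar[d]^{\bar g} & 0\\
0 \ar[r] & X \ar[r]_{\alpha} & Y \ar[r] & \Coker{(\alpha)} \ar[r] & 0,}
\]
to which the Snake Lemma applies, yielding an exact sequence
\[
0 \to \Ker{(g')} \to \Ker{(g)} \to \Ker{(\bar g)} \to \Coker{(g')} \to \Coker{(g)} \to \Coker{(\bar g)} \to 0.
\]
Here $\Ker{(g')}\to\Ker{(g)}$, which is the restriction of $\alpha'$, is an isomorphism — a standard feature of pullback squares — and $\Coker{(g')}=0$, since $g'$ is the pullback of the epimorphism $g$ and pullbacks preserve epimorphisms in an abelian category; also $\Coker{(g)}=0$. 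Substituting these into the exact sequence forces $\Ker{(\bar g)}=0=\Coker{(\bar g)}$, so $\bar g\colon\Coker{(\alpha')}\to\Coker{(\alpha)}$ is an isomorphism. As $\Coker{(\alpha)}\in\sfC$ and $\sfC$ is closed under isomorphism, $\Coker{(\alpha')}\in\sfC$, whence $\alpha'\in\Mon(\sfC)$, which proves~(a).

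A slightly cleaner way to obtain the cokernel identification bypasses the Snake Lemma: in an abelian category a pullback along an epimorphism is automatically a pushout square, because the canonical sequence $0\to P\to X\oplus Y'\to Y\to 0$ is then short exact; and for a pushout square the comparison map $\Coker{(\alpha')}\to\Coker{(\alpha)}$ induced by $g$ is always an isomorphism, so again $\Coker{(\alpha')}\cong\Coker{(\alpha)}\in\sfC$. I do not expect a genuine obstacle in either version. The only points demanding a little care are the bookkeeping of which projection's kernel is matched with which (so that the connecting map of the Snake Lemma is inserted on the correct side) and the standing convention that $\sfC$ is closed under isomorphisms — without it the conclusion only reads that $\Coker{(\alpha')}$ is isomorphic to an object of $\sfC$, which is in any case the intended meaning of the statement.
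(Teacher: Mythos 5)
Your proof is correct and follows essentially the same route as the paper: prove (a) by identifying $\Coker(\alpha')\cong\Coker(\alpha)$ for the pullback of $\alpha\in\Mon(\sfC)$ along an epimorphism, and obtain (b) by duality. The paper simply asserts this identification in a displayed diagram of short exact sequences, whereas you justify it in detail via the Snake Lemma (or the pullback-is-pushout observation); both are fine.
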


\begin{prf*}
We only prove (a); the statement (b) can be proved dually.

Take two morphisms $\alpha: X \to Z$ and $\beta: Y \to Z$ in $\sfA$ with $\alpha$ in $\Mon(\sfC)$ and $\beta$ an epimorphism. We deduce from the pullback diagram
\[
\xymatrix{
  0 \ar[r]    & P \ar@{}[rd]|<<{\ulcorner}\ar[d]\ar[r]^{\alpha'} & Y \ar[d]^{\beta}\ar[r] & \Coker(\alpha) \ar@{=}[d]\ar[r] & 0 \\
  0 \ar[r]    & X \ar[r]^{\alpha}                          & Z \ar[r] &  \Coker(\alpha) \ar[r] & 0
}
\]
that $\alpha'$ is contained in $\Mon(\sfC)$ as well. Hence, the conclusion follows.
\end{prf*}

\section{From weak factorization systems to model structures}\label{sec1}
\noindent
In this section, we describe a method to construct model structures on general categories via two compatible weak factorization systems satisfying certain conditions, and prove Theorem \ref{thmC} as advertised in the introduction, which generalizes a very useful result by Gillespie for abelian model structures.

We begin with the following definition of model structures, which is a slight generalization of the usual one; see Gambino, Henry, Sattler and Szumi{\l}o \cite{GHS22}.

\begin{bfhpg}[\bf Model structures] \label{new df of model structure}
A {\it model structure} on $\sfE$ is a triple $(\calC, \calW, \calF)$ of classes of morphisms in $\sfE$ satisfying:
\begin{prt}
\item $\sfE$ has pushouts along morphisms in $\calC$ and pullbacks along morphisms in $\calF$;
\item $(\calC, \calW \cap \calF)$ and $(\calC \cap \calW, \calF)$ are weak factorization systems;
\item $\calW$ satisfies the 2-out-of-3 property, i.e., for composable morphisms $\alpha$ and $\beta$, if two of the three morphisms $\alpha$, $\beta$ and $\beta\alpha$ are in $\calW$, then so is the third one.
\end{prt}
Morphisms in $\calC$ (resp., $\calW$, $\calF$) are called {\it cofibrations} (resp., {\it weak equivalences}, {\it fibrations}). Morphisms in $\calC \cap \calW$ (resp., $\calF \cap \calW$) are called {\it trivial cofibrations} (resp., {\it trivial fibrations}). Recall that a model structure is said to be {\it hereditary} if both the class of cofibrations and the class of trivial cofibrations satisfy the left cancellation property.
\end{bfhpg}

\begin{rmk}
With respect to this definition, one can show that $\calW$ is closed under retracts; see the proof of \cite[Proposition E.1.3]{Jo08}. One can also show that the model structure is completely determined by two of its three classes of morphisms. When $\sfE$ is finitely complete and cocomplete, the above definition is equivalent to the classical one in the sense of Quillen \cite{Qui67}.
\end{rmk}

Given a model structure $(\calC, \calW, \calF)$ on $\sfE$, the next result shows that $(\calC, \calW \cap \calF)$ and $(\calC \cap \calW, \calF)$ are two compatible weak factorization systems in $\sfE$.

\begin{prp}\label{mcw}
Let $(\calC, \calW, \calF)$ be a model structure on $\sfE$. Then $(\calC, \calW \cap \calF)$ and $(\calC \cap \calW, \calF)$ are compatible weak factorization systems in $\sfE$.
\end{prp}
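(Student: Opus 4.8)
The plan is to unwind Definition~\ref{compatibledef} with $(\calC,\bcalF)$ taken to be $(\calC,\calW\cap\calF)$ and $(\bcalC,\calF)$ taken to be $(\calC\cap\calW,\calF)$. That both pairs are weak factorization systems is precisely axiom~(b) in Definition~\ref{new df of model structure}, so the only thing left to check is that they are compatible, i.e.\ that conditions (CP1), (CP2) and (CP3) hold; I expect each of these to follow formally from the 2-out-of-3 property of $\calW$ together with the fact that $\calF$ is closed under composition.

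Condition (CP1) is immediate, since $\calC\cap\calW\subseteq\calC$ and $\calW\cap\calF\subseteq\calF$. For (CP2), let $\alpha$ and $\beta$ be composable morphisms in $\calF$. First I would note that $\beta\alpha\in\calF$ as well, by Remark~\ref{rmk1.3}(a), so all three of $\alpha$, $\beta$, $\beta\alpha$ lie in $\calF$. If two of them lie in $\bcalF=\calW\cap\calF$, then in particular two of them lie in $\calW$, whence the 2-out-of-3 property of $\calW$ puts the third one in $\calW$, and hence in $\calW\cap\calF=\bcalF$.

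For (CP3), suppose $c\in\bcalC=\calC\cap\calW$ and $f\in\calF$ with $fc\in\bcalC$. Then $c\in\calW$ and $fc\in\calW$, so $f\in\calW$ by the 2-out-of-3 property; since $f\in\calF$ by hypothesis, we conclude $f\in\calW\cap\calF=\bcalF$, as required. I do not anticipate a genuine obstacle here: the proposition is essentially a bookkeeping consequence of the model-structure axioms, with the only external input being Remark~\ref{rmk1.3}(a) used in (CP2). The one point that deserves explicit mention is that the roles of the two factorization systems are assigned so that the inclusions $\bcalC\subseteq\calC$ and $\bcalF\subseteq\calF$ hold on the nose, which is what makes (CP1) automatic and lets (CP2) and (CP3) collapse to the 2-out-of-3 property of $\calW$.
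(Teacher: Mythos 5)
Your proof is correct and follows exactly the same route as the paper's (the paper simply states that (CP1) is obvious and that (CP2) and (CP3) follow from the 2-out-of-3 property of $\calW$; you have filled in those routine verifications). Nothing to change.
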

\begin{prf*}
It is clear that $(\calC, \calW \cap \calF)$ and $(\calC \cap \calW, \calF)$ are weak factorization systems in $\sfE$. Moreover, the above two weak factorization systems are compatible since the condition (CP1) holds obviously, and the conditions (CP2) and (CP3) follow from the 2-out-of-3 property of $\calW$.
\end{prf*}

In the following, we shall prove Theorem \ref{thmC}, a partial converse statement of Proposition \ref{mcw}, that is, under some mild conditions two compatible weak factorization systems in $\sfE$ induce a model structure on $\sfE$. We need some preparation.

\begin{lem}\label{commutative}
Let $(\calC, \bcalF)$ and $(\bcalC, \calF)$ be two
compatible weak factorization systems in $\sfE$.
Suppose that $\sfE$ has pullbacks along morphisms in $\calF$,
$(\bcalC, \calF)$ satisfies the Frobenius property,
and $\calC$ satisfies the left cancellation property.
If $\alpha \in \calW_{\bcalC, \bcalF}$ and $\beta \in \bcalC$,
then $\beta\alpha$ $($if it is defined$)$ is in $\calW_{\bcalC, \bcalF}$.
\end{lem}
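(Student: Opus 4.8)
The plan is to reduce to the case $\alpha\in\bcalF$ and then, after factoring $\beta\alpha$ through the weak factorization system $(\calC,\bcalF)$, to identify the cofibration part of that factorization as a morphism in $\bcalC$ by comparing the source of $\alpha$ with the pullback of $\beta$ along the fibration part. First I would use that $\alpha\in\calW_{\bcalC,\bcalF}$ means $\alpha=\widetilde{f}\widetilde{c}$ with $\widetilde{c}\in\bcalC$ and $\widetilde{f}\in\bcalF$; since $\bcalC$ is closed under composition (Remark~\ref{rmk1.3}(a)), once one knows $\beta\widetilde{f}\in\calW_{\bcalC,\bcalF}$, writing $\beta\widetilde{f}=\widehat{f}\widehat{c}$ with $\widehat{c}\in\bcalC$, $\widehat{f}\in\bcalF$ gives $\beta\alpha=\widehat{f}(\widehat{c}\widetilde{c})\in\calW_{\bcalC,\bcalF}$. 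So the problem reduces to the following: given $\widetilde{f}: C\to Y$ in $\bcalF$ and $\beta: Y\to Z$ in $\bcalC$, show $\beta\widetilde{f}\in\calW_{\bcalC,\bcalF}$.

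Next I would factor $\beta\widetilde{f}$ through $(\calC,\bcalF)$ as $\beta\widetilde{f}=\widetilde{b}\,\widetilde{a}$ with $\widetilde{a}: C\to M$ in $\calC$ and $\widetilde{b}: M\to Z$ in $\bcalF$; it then suffices to prove $\widetilde{a}\in\bcalC$. Since $\widetilde{b}\in\bcalF\subseteq\calF$ and $\sfE$ has pullbacks along $\calF$, I can form the pullback
\[
\xymatrix{
P \ar@{}[rd]|<<{\ulcorner}\ar[d]_{\pi_Y}\ar[r]^{\pi_M} & M \ar[d]^{\widetilde{b}} \\
Y \ar[r]_{\beta} & Z.}
\]
Here $\pi_M$ is a pullback of $\beta\in\bcalC$ along $\widetilde{b}\in\calF$, so $\pi_M\in\bcalC$ by the Frobenius property of $(\bcalC,\calF)$; and $\pi_Y$ is a pullback of $\widetilde{b}\in\bcalF$, so $\pi_Y\in\bcalF$ by Remark~\ref{rmk1.3}(c) applied to the weak factorization system $(\calC,\bcalF)$. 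From $\beta\widetilde{f}=\widetilde{b}\widetilde{a}$, the universal property of the pullback yields $v: C\to P$ with $\pi_Y v=\widetilde{f}$ and $\pi_M v=\widetilde{a}$.

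The rest goes through quickly. From $\pi_M v=\widetilde{a}\in\calC$ and $\pi_M\in\bcalC\subseteq\calC$, the left cancellation property of $\calC$ gives $v\in\calC$. From $\widetilde{f}=\pi_Y v$ with $\widetilde{f}\in\bcalF\subseteq\calW_{\bcalC,\bcalF}$, with $\pi_Y\in\bcalF$, and with pullbacks along $\bcalF$ available, Lemma~\ref{compatible}(a) gives $v\in\calW_{\bcalC,\bcalF}$; hence $v\in\calC\cap\calW_{\bcalC,\bcalF}=\bcalC$ by Lemma~\ref{weclass}. Finally $\widetilde{a}=\pi_M v$ is a composite of two morphisms in $\bcalC$, so $\widetilde{a}\in\bcalC$ by Remark~\ref{rmk1.3}(a), and $\beta\widetilde{f}=\widetilde{b}\widetilde{a}$ is the required decomposition.

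The step I expect to be the real obstacle is setting up the right configuration in the middle paragraph: one has to factor $\beta\widetilde{f}$ through $(\calC,\bcalF)$ --- \emph{not} through $(\bcalC,\calF)$ --- and pull $\beta$ back along the fibration $\widetilde{b}$, precisely so that the Frobenius property controls the leg $\pi_M$ while the other leg $\pi_Y$ automatically lands in $\bcalF$ and becomes accessible to Lemma~\ref{compatible}(a). With any other choice the comparison map $v$ into the pullback is not governed by the hypotheses. Once the pullback is chosen correctly, everything reduces to the compatibility conditions packaged in Lemmas~\ref{weclass} and~\ref{compatible}, and no cancellation beyond the single use of left cancellation of $\calC$ is needed.
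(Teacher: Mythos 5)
Your proof is correct and is essentially the paper's own argument: factor the composite through the weak factorization system $(\calC,\bcalF)$, pull $\beta$ back along the resulting $\bcalF$-morphism, use the Frobenius property and pullback-stability of $\bcalF$ on the two legs, and identify the comparison map into the pullback as a member of $\bcalC$ via left cancellation of $\calC$, Lemma~\ref{compatible}(a), and Lemma~\ref{weclass}. The only difference is your preliminary reduction to the case $\alpha=\widetilde{f}\in\bcalF$ (using that $\bcalC$ is closed under composition), which the paper avoids by applying Lemma~\ref{compatible}(a) directly to $\alpha\in\calW_{\bcalC,\bcalF}$; both variants are valid.
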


\begin{prf*}
Write $\alpha: X \to Y$ and $\beta: Y \to Z$. Since $(\calC, \bcalF)$ is a weak factorization system in $\sfE$, there exist a morphism $c: X \to T$ in $\calC$ and
a morphism $\widetilde{f}: T \to Z$ in $\bcalF$ such that $\beta \alpha = \widetilde{f} c$. It suffices to show that $c$ is contained in $\bcalC$.

Consider the pullback diagram
\[
\xymatrix{
  P \ar@{}[rd]|<<{\ulcorner}\ar[d]_{\widetilde{c}}\ar[r]^{\widetilde{f}'} & Y \ar[d]^{\beta} \\
  T \ar[r]_{\widetilde{f}}                                                & Z.}
\]
Then $\widetilde{f}'$ is contained in $\bcalF$ as $\widetilde{f}$ is so;
see Remark \ref{rmk1.3}(c). Note that $\widetilde{f}$ is also in $\calF$
as $\bcalF \subseteq \calF$.
Since $\beta$ belongs to $\bcalC$ by assumption,
it follows from the Frobenius property of $(\bcalC, \calF)$ that
$\widetilde{c}$ is in $\bcalC$.
By the commutative diagram
\[
\xymatrix@R=0.5cm{
  X \ar[dd]_{c}\ar[rr]^{\alpha}\ar@{.>}[dr]^{\tau}           && Y \ar[dd]^{\beta}\\
                             & P \ar[dl]_{\widetilde{c}}\ar[ur]_{\widetilde{f}'} \\
  T \ar[rr]_{\widetilde{f}}                                  && Z}
\]
as well as the universal property of pullbacks, there is a morphism $\tau: X \to P$ such that $\widetilde{f}'\tau = \alpha$ and $\widetilde{c}\tau = c$.
Hence, to show that $c$ is in $\bcalC$, it suffices to show that $\tau$ is contained in $\bcalC$. Indeed, it is clear that $\tau$ belongs to $\calC$ as $\calC$ satisfies the left cancellation property. Moreover, since $\widetilde{f}'\tau = \alpha \in \calW_{\bcalC, \bcalF}$, one has $\tau \in \calW_{\bcalC, \bcalF}$ by Lemma \ref{compatible}. Thus, $\tau \in \calW_{\bcalC, \bcalF} \cap \calC = \bcalC$ by Lemma \ref{weclass}, as desired.
\end{prf*}

To prove that $(\calC, \calW_{\bcalC, \bcalF}, \calF)$ is a model structure on $\sfE$, by Lemma \ref{weclass}, it suffices to show that $\calW_{\bcalC, \bcalF}$ satisfies the 2-out-of-3 property, which is established in the following three lemmas.

\begin{lem}\label{composition}
Under the same assumptions as specified in Lemma \ref{commutative}, if $\alpha \in \calW_{\bcalC, \bcalF}$ and $\beta \in \calW_{\bcalC, \bcalF}$, then $\beta\alpha$
$($if it is defined$)$ is in $\calW_{\bcalC, \bcalF}$ as well.
\end{lem}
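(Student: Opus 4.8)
The plan is to reduce the problem to a single application of Lemma \ref{commutative}, plus the elementary fact that the classes $\bcalC$ and $\bcalF$ are each closed under composition (Remark \ref{rmk1.3}(a), applied to the weak factorization systems $(\bcalC,\calF)$ and $(\calC,\bcalF)$). Write $\alpha\colon X\to Y$ and $\beta\colon Y\to Z$. By the definition of $\calW_{\bcalC,\bcalF}$, factor $\alpha=\widetilde{f}_1\widetilde{c}_1$ with $\widetilde{c}_1\in\bcalC$, $\widetilde{f}_1\in\bcalF$, and $\beta=\widetilde{f}_2\widetilde{c}_2$ with $\widetilde{c}_2\in\bcalC$, $\widetilde{f}_2\in\bcalF$. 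Then $\beta\alpha=\widetilde{f}_2(\widetilde{c}_2\widetilde{f}_1)\widetilde{c}_1$, so everything hinges on the ``wrong-order'' middle composite $\widetilde{c}_2\widetilde{f}_1$, which is a morphism of $\bcalC$ preceded by a morphism of $\bcalF$.

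First I would observe that $\widetilde{f}_1$, being in $\bcalF$, lies in $\calW_{\bcalC,\bcalF}$ (via the trivial factorization with an identity in $\bcalC$). Applying Lemma \ref{commutative} with the roles of $\alpha$ and $\beta$ there played by $\widetilde{f}_1\in\calW_{\bcalC,\bcalF}$ and $\widetilde{c}_2\in\bcalC$ respectively, we conclude $\widetilde{c}_2\widetilde{f}_1\in\calW_{\bcalC,\bcalF}$; here we use that the hypotheses of Lemma \ref{commutative} are exactly those of the present lemma. Hence $\widetilde{c}_2\widetilde{f}_1$ factors as $\widetilde{f}'\widetilde{c}'$ with $\widetilde{c}'\in\bcalC$ and $\widetilde{f}'\in\bcalF$.

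Substituting this factorization gives $\beta\alpha=(\widetilde{f}_2\widetilde{f}')(\widetilde{c}'\widetilde{c}_1)$. Since $\bcalF$ is closed under composition, $\widetilde{f}_2\widetilde{f}'\in\bcalF$; since $\bcalC$ is closed under composition, $\widetilde{c}'\widetilde{c}_1\in\bcalC$. Therefore $\beta\alpha$ admits a decomposition of the required form and lies in $\calW_{\bcalC,\bcalF}$. The only nontrivial input is Lemma \ref{commutative}, which handles the middle composite; once that is in place the argument is purely formal, so I expect no further obstacle.
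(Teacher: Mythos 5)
Your proposal is correct and is essentially identical to the paper's own proof: both factor $\alpha$ and $\beta$ through $\bcalC$ followed by $\bcalF$, apply Lemma \ref{commutative} to the middle composite $\widetilde{c}_2\widetilde{f}_1$ (using that $\bcalF\subseteq\calW_{\bcalC,\bcalF}$), and then conclude via closure of $\bcalC$ and $\bcalF$ under composition. No gaps.
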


\begin{prf*}
Write $\alpha: X \to Y$ and $\beta: Y \to Z$. Then we can find morphisms $\widetilde{c}: X \to X'$ and $\widetilde{c}': Y \to Y'$ in $\bcalC$ together with morphisms $\widetilde{f}: X' \to Y$ and $\widetilde{f}': Y' \to Z$ in $\bcalF$ such that $\alpha = \widetilde{f}\widetilde{c}$ and $\beta = \widetilde{f}' \widetilde{c}'$; see the commutative diagram
\[
\xymatrix@R=0.5cm{
X \ar[rr]^{\alpha} \ar[dr]_{\widetilde{c}} && Y \ar[rr]^{\beta} \ar[dr]_{\widetilde{c}'} && Z.\\
                & X' \ar[ur]_{\widetilde{f}}  && Y'  \ar[ur]_{\widetilde{f}'}}
\]
By Lemma \ref{commutative}, one has $\widetilde{c}'\widetilde{f} \in \calW_{\bcalC,\bcalF}$, so there are morphisms $\widetilde{c}'': X' \to T$ in $\bcalC$ and  $\widetilde{f}'': T \to Y'$ in $\bcalF$ satisfying $\widetilde{c}' \widetilde{f} = \widetilde{f}'' \widetilde{c}''$; see the commutative diagram
\[
\xymatrix@R=0.5cm{
X \ar[rr]^{\alpha} \ar[dr]_{\widetilde{c}} && Y \ar[rr]^{\beta} \ar[dr]_{\widetilde{c}'} && Z.\\
                & X' \ar[ur]_{\widetilde{f}}\ar[dr]_{\widetilde{c}''}  && Y'  \ar[ur]_{\widetilde{f}'}\\
                &&T \ar[ur]_{\widetilde{f}''}}\\
\]
Consequently, $\beta \alpha = \widetilde{f}' \widetilde{f}'' \widetilde{c}'' \widetilde{c}$, which is contained in $\calW_{\bcalC, \bcalF}$.
\end{prf*}

\begin{lem}\label{(2-3)property1}
Under the same assumptions as specified in Lemma \ref{commutative} and the extra condition that $\sfE$ has pushouts along morphisms in $\bcalC$, if $\alpha \in \calW_{\bcalC, \bcalF}$ and $\beta\alpha \in \calW_{\bcalC, \bcalF}$,
then $\beta \in \calW_{\bcalC, \bcalF}$.
\end{lem}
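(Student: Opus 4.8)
The plan is to reduce the claim, by factoring $\alpha$, to the special case where $\alpha$ is a trivial fibration, and then to settle that case by factoring $\beta$ through the weak factorization system $(\bcalC,\calF)$ and invoking the compatibility condition (CP2). Throughout I will use the following elementary observation: if $\varphi=rl$ with $l\in\bcalC$, $r\in\calF$, and $\varphi\in\calW_{\bcalC,\bcalF}$, then $r\in\bcalF$. Indeed, since $\sfE$ has pushouts along morphisms in $\bcalC$, Lemma~\ref{compatible}(b) gives $r\in\calW_{\bcalC,\bcalF}$, and hence $r\in\calF\cap\calW_{\bcalC,\bcalF}=\bcalF$ by Lemma~\ref{weclass}. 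I also note that $\bcalF\subseteq\calW_{\bcalC,\bcalF}$, since every $f\in\bcalF$ factors as $f\circ\id$ with $\id\in\bcalC$.

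First I would write $\alpha\colon X\to Y$, $\beta\colon Y\to Z$ and factor $\alpha$ through $(\bcalC,\calF)$ as $\alpha=r_\alpha l_\alpha$ with $l_\alpha\in\bcalC$ and $r_\alpha\in\calF$. Applying the observation to $\alpha\in\calW_{\bcalC,\bcalF}$ yields $r_\alpha\in\bcalF$; and since $\beta\alpha=(\beta r_\alpha)l_\alpha$ lies in $\calW_{\bcalC,\bcalF}$ with $l_\alpha\in\bcalC$, Lemma~\ref{compatible}(b) gives $\beta r_\alpha\in\calW_{\bcalC,\bcalF}$. So it suffices to prove the reduced statement: if $p\in\bcalF$ and $\beta p\in\calW_{\bcalC,\bcalF}$, then $\beta\in\calW_{\bcalC,\bcalF}$.

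To prove the reduced statement I would factor $\beta$ through $(\bcalC,\calF)$ as $\beta=rl$ with $l\in\bcalC$ and $r\in\calF$; it then suffices to show $r\in\bcalF$, for then $\beta=rl$ exhibits $\beta\in\calW_{\bcalC,\bcalF}$. Since $p\in\bcalF\subseteq\calW_{\bcalC,\bcalF}$ and $l\in\bcalC$, Lemma~\ref{commutative} gives $lp\in\calW_{\bcalC,\bcalF}$; factoring $lp$ through $(\bcalC,\calF)$ as $lp=r'l'$ with $l'\in\bcalC$ and $r'\in\calF$, the observation gives $r'\in\bcalF$. Now $\beta p=r(lp)=(rr')l'$, where $l'\in\bcalC$ and $rr'\in\calF$ (the class $\calF$ being closed under composition), and $\beta p\in\calW_{\bcalC,\bcalF}$, so the observation gives $rr'\in\bcalF$. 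Finally $r$ and $r'$ are composable morphisms of $\calF$ with both $r'$ and $rr'$ in $\bcalF$, so (CP2) forces $r\in\bcalF$, as required.

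I expect the genuinely delicate step to be the passage from $\beta p\in\calW_{\bcalC,\bcalF}$ to $\beta\in\calW_{\bcalC,\bcalF}$, i.e.\ cancelling a trivial fibration $p$ from the domain side: morphisms of $\bcalC$ can be cancelled there by Lemma~\ref{compatible}(b), but nothing cancels morphisms of $\bcalF$ on that side, and the whole point is to transport the question --- via a factorization of $\beta$ --- into a cancellation that happens entirely within $\calF$ among $r$, $r'$, $rr'$, where (CP2) applies. The remaining ingredients are routine bookkeeping with the two weak factorization systems; the Frobenius property and the left cancellation property of $\calC$ are used only through the previously established Lemma~\ref{commutative}, while the new hypothesis of pushouts along $\bcalC$ enters through Lemmas~\ref{compatible}(b) and~\ref{weclass}.
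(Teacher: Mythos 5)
Your proof is correct and follows essentially the same route as the paper's: factor $\beta$ through $(\bcalC,\calF)$, rewrite $\beta\alpha$ as a composite of two $\calF$-morphisms following a $\bcalC$-morphism, cancel the $\bcalC$-part with Lemma~\ref{compatible}(b) and Lemma~\ref{weclass} to place the $\calF$-composite in $\bcalF$, and finish with (CP2). Your preliminary reduction to the case where $\alpha$ is a trivial fibration, and your use of Lemma~\ref{commutative} where the paper invokes Lemma~\ref{composition}, are only cosmetic repackagings of the same argument.
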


\begin{prf*}
Write $\alpha: X \to Y$ and $\beta: Y \to Z$. Since $(\bcalC, \calF)$ is a weak factorization system in $\sfE$, there is a decomposition $\beta = f'\widetilde{c}'$ with $\widetilde{c}': Y \to Y'$ in $\bcalC$ and $f': Y' \to Z$ in $\calF$. It suffices to show that $f'$ is contained in $\bcalF$.

Since $\alpha \in \calW_{\bcalC, \bcalF}$, one gets a decomposition $\alpha = \widetilde{f}\widetilde{c}$ with $\widetilde{c}: X \to X'$ in $\bcalC$ and $\widetilde{f}: X' \to Y$ in $\bcalF$. By Lemmas \ref{weclass} and \ref{composition}, $\widetilde{c}'\widetilde{f}$ is in $\calW_{\bcalC, \bcalF}$, so it also has a decomposition $\widetilde{c}'\widetilde{f} = \widetilde{f}''\widetilde{c}''$ with $\widetilde{c}'': X' \to T$ in $\bcalC$ and  $\widetilde{f}'': T \to Y'$ in $\bcalF$; see the commutative diagram
\[
\xymatrix@R=0.5cm{
X \ar[rr]^{\alpha} \ar[dr]_{\widetilde{c}} && Y \ar[rr]^{\beta} \ar[dr]_{\widetilde{c}'} && Z.\\
                & X' \ar[ur]_{\widetilde{f}}\ar[dr]_{\widetilde{c}''}  && Y'  \ar[ur]_{f'}\\
                &&T \ar[ur]_{\widetilde{f}''}}\\
\]
Consequently, one has
\[
\beta \alpha = f'\widetilde{c}' \widetilde{f} \widetilde{c} = f'\widetilde{f}''\widetilde{c}''\widetilde{c}.
\]
Note that $\beta\alpha \in \calW_{\bcalC, \bcalF}$ and $\widetilde{c}''\widetilde{c} \in \bcalC$, and $\sfE$ has pushouts along morphisms in $\bcalC$. It follows from Lemma \ref{compatible}(b) that $f'\widetilde{f}''$ is in $\calW_{\bcalC, \bcalF}$, and so one has $f'\widetilde{f}'' \in \bcalF$ by Lemma \ref{weclass}.
The condition (CP2) tells us that $f' \in \bcalF$, as desired.
\end{prf*}

\begin{lem}\label{(2-3)property2}
Under the same assumptions as specified in Lemma \ref{commutative} and the extra conditions that $\sfE$ has pushouts along morphisms in $\bcalC$ and $\bcalC$ satisfies the left cancellation property, if $\beta \in \calW_{\bcalC, \bcalF}$ and $\beta\alpha \in \calW_{\bcalC, \bcalF}$, then $\alpha \in \calW_{\bcalC, \bcalF}$.
\end{lem}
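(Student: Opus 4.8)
The plan is to follow the pattern of Lemma \ref{(2-3)property1}: transport the hypothesis $\beta,\beta\alpha\in\calW_{\bcalC,\bcalF}$ down along suitable factorizations until it becomes a statement about a morphism lying in $\calC$, where Lemma \ref{weclass} turns membership in $\calW_{\bcalC,\bcalF}$ into membership in $\bcalC$, and then to finish by the left cancellation property of $\bcalC$. Write $\alpha:X\to Y$ and $\beta:Y\to Z$.

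First I would factor $\alpha$ through the weak factorization system $(\calC,\bcalF)$ as $\alpha=\widetilde{f}c$ with $c:X\to T$ in $\calC$ and $\widetilde{f}:T\to Y$ in $\bcalF$. Since $\bcalF\subseteq\calW_{\bcalC,\bcalF}$ (write $\widetilde{f}=\widetilde{f}\circ\id$, using that $\id\in\bcalC$ by Remark \ref{rmk1.3}(a)), in order to conclude $\alpha\in\calW_{\bcalC,\bcalF}$ it suffices to show $c\in\bcalC$; and since $c\in\calC$, by Lemma \ref{weclass} this is the same as showing $c\in\calW_{\bcalC,\bcalF}$.

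Next I would put $k=\beta\widetilde{f}:T\to Z$. As $\beta\in\calW_{\bcalC,\bcalF}$ and $\widetilde{f}\in\bcalF\subseteq\calW_{\bcalC,\bcalF}$, Lemma \ref{composition} gives $k\in\calW_{\bcalC,\bcalF}$. Factor $k$ again through $(\calC,\bcalF)$ as $k=\widetilde{g}d$ with $d:T\to W$ in $\calC$ and $\widetilde{g}:W\to Z$ in $\bcalF$. Since $\sfE$ has pullbacks along morphisms in $\calF$, hence along morphisms in $\bcalF$, applying Lemma \ref{compatible}(a) to the decomposition $k=\widetilde{g}d$ (with $\widetilde{g}\in\bcalF$ and $k\in\calW_{\bcalC,\bcalF}$) yields $d\in\calW_{\bcalC,\bcalF}$, so $d\in\calC\cap\calW_{\bcalC,\bcalF}=\bcalC$ by Lemma \ref{weclass}. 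Now $\beta\alpha=\beta\widetilde{f}c=kc=\widetilde{g}(dc)$, and $dc\in\calC$ because $\calC$ is closed under composition (Remark \ref{rmk1.3}(a)); applying Lemma \ref{compatible}(a) once more to $\beta\alpha=\widetilde{g}(dc)$ (with $\widetilde{g}\in\bcalF$ and $\beta\alpha\in\calW_{\bcalC,\bcalF}$ by hypothesis) gives $dc\in\calW_{\bcalC,\bcalF}$, whence $dc\in\calC\cap\calW_{\bcalC,\bcalF}=\bcalC$. Finally, from $dc\in\bcalC$, $d\in\bcalC$ and the left cancellation property of $\bcalC$ we deduce $c\in\bcalC$, as required.

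I expect the main obstacle to be the recurring bookkeeping issue of this section: arranging the factorizations so that $\calW_{\bcalC,\bcalF}$-membership lands on a morphism that already belongs to $\calC$ — so that Lemma \ref{weclass} promotes it to $\bcalC$-membership and the final cancellation applies — without covertly using the 2-out-of-3 property we are still proving. The Frobenius property of $(\bcalC,\calF)$ and the left cancellation hypothesis on $\calC$ (entering through Lemmas \ref{compatible} and \ref{composition}) are exactly what make the two descents via Lemma \ref{compatible}(a) legitimate, while the left cancellation hypothesis on $\bcalC$ is what closes the loop. I would also verify that the hypothesis that $\sfE$ has pushouts along morphisms in $\bcalC$ is not actually needed on this route (it is used in Lemma \ref{(2-3)property1} and in Lemma \ref{compatible}(b), but the argument above invokes only Lemma \ref{compatible}(a)).
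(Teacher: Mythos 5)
Your proof is correct and follows essentially the same route as the paper's: both reduce the claim to showing that the $\calC$-part $c$ of the factorization $\alpha=\widetilde{f}c$ lies in $\bcalC$, transport $\calW_{\bcalC,\bcalF}$-membership of $\beta\alpha$ down along a morphism of $\bcalF$ via Lemma \ref{compatible}(a), upgrade to $\bcalC$-membership via Lemma \ref{weclass}, and finish with the left cancellation property of $\bcalC$ (the only difference being that you factor $\beta\widetilde{f}$ directly after applying Lemma \ref{composition}, whereas the paper factors $\beta$ and then $\widetilde{c}'\widetilde{f}$). Your side observation is also accurate: neither your argument nor the paper's invokes pushouts along $\bcalC$ in this lemma, since only part (a) of Lemma \ref{compatible} is used.
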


\begin{prf*}
The proof is very similar to that of Lemma \ref{(2-3)property1}, so we only give a sketch to illustrate the application of the additional condition that $\bcalC$ satisfies the left cancellation property.

Write $\alpha: X\to Y$ and $\beta: Y\to Z$. Decompose $\alpha = \widetilde{f}c$ with $c: X \to X'$ in $\calC$ and $\widetilde{f}: X' \to Y$ in $\bcalF$. It remains to show that $c \in \bcalC$. As we did in the proof of Lemma \ref{(2-3)property1}, there is a decomposition $\beta\alpha = \widetilde{f}' \widetilde{f}'' \widetilde{c}'' c$. Since $\beta\alpha \in \calW_{\bcalC, \bcalF}$ and $\widetilde{f}'\widetilde{f}'' \in \bcalF$, one has $\widetilde{c}''c \in \calW_{\bcalC, \bcalF}$ by Lemma \ref{compatible}.
Hence, by Lemma \ref{weclass}, one concludes that $\widetilde{c}''c \in \bcalC$. Now the left cancellation property of $\bcalC$ tells us that $c \in \bcalC$.
\end{prf*}

We can now give the proof of Theorem \ref{thmC}.

\begin{bfhpg}[\bf Proof of Theorem \ref{thmC}]
By Lemma \ref{weclass}, we see that the pairs $(\calC, \calW_{\bcalC, \bcalF} \cap \calF)$ and
$(\calC \cap \calW_{\bcalC, \bcalF}, \calF)$ are weak factorization systems in $\sfE$. Moroever, $\calW_{\bcalC, \bcalF}$ satisfies the 2-out-of-3 property by Lemmas \ref{composition}, \ref{(2-3)property1} and \ref{(2-3)property2}. Thus, $(\calC, \calW_{\bcalC, \bcalF}, \calF)$ forms a model structure on $\sfE$, and furthermore, it is hereditary by the condition (3).
\qed\end{bfhpg}

\begin{rmk}
In Theorem \ref{thmC}, the class $\calW_{\bcalC, \bcalF}$ in the model structure $(\calC, \calW_{\bcalC, \bcalF}, \calF)$ is unique. Indeed, if $(\calC, \calW, \calF)$ is another model structure on $\sfE$, then $\calC \cap \calW = \calC \cap \calW_{\bcalC, \bcalF}$ and $\calW \cap \calF = \calW_{\bcalC, \bcalF} \cap \calF$. It is easy to check that $\calW = \calW_{\bcalC, \bcalF}$.
\end{rmk}

The following two examples show that there indeed exist model structures which do not satisfy the condition (2) or (3) in Theorem \ref{thmC}. Recall that a model structure is {\it right proper} if weak equivalences are preserved under pullbacks along fibrations. We mention that a model structure $(\calC, \calW, \calF)$ on $\sfE$ in which the cofibrations are preserved under pullbacks is right proper if and only if the weak factorization system
$(\calC\cap\calW, \calF)$ satisfies the Frobenius property; see \cite{VG12}.

\begin{exa}\label{exa1}
Let $\sset$ denote the category of simplicial sets, defined as usual to be the category of presheaves of sets over the simplex category $\triangle$. There is a model structure $(\calC, \calW, \calF)$ on $\sset$ in which weak equivalences are rational homology isomorphisms and cofibrations are inclusions; see Quillen \cite{Quillen69}. Thus the cofibrations are preserved under pullbacks. Consider the corresponding compatible weak factorization systems $(\calC, \bcalF)$ and $(\bcalC, \calF)$ with $\bcalF=\calW\cap\calF$ and $\bcalC=\calC\cap\calW$; see Proposition \ref{mcw}. It is clear that both $\calC$ and $\bcalC$ satisfy the left cancellation property.
However, this model structure is not right proper; see \cite[p.p. 71]{Rezk02}.
Thus, the weak factorization system $(\bcalC, \calF)$ does not satisfy the Frobenius property.
\end{exa}

\begin{exa}\label{exa2}
Let $\Set$ denote the category of sets. It is well known that there is a model structure $(\calC, \calW, \calF)$ on $\Set$, where $\calC$ is the class of surjective maps, $\calF$ is the class of injective maps, and $\calW$ is the class of all maps. Clearly, in this model structure, both $\calC$ and $\bcalC=\calC\cap\calW$ are the class of surjective maps, and hence, do not satisfy the left cancellation property. However, the weak factorization system $(\bcalC, \calF)$ does satisfy the Frobenius property since $\Set$ is a regular category in which every epimorphism is a regular epimorphism.
\end{exa}

\section{Examples of compatible weak factorization systems}\label{sec:examples}
\noindent
It follows from Theorems \ref{thmA} and \ref{thmB} and Proposition \ref{Frobenius property} that if $(\sfC, \bsfF)$ and $(\bsfC, \sfF)$ are compatible complete hereditary cotorsion pairs in an abelian category $\sfA$, then the induced weak factorization systems $(\Mon(\sfC), \Epi(\bsfF))$ and $(\Mon(\bsfC), \Epi(\sfF))$ are compatible satisfying all conditions specified in Theorem \ref{thmC}. In this section, we list some examples of compatible weak factorization systems associated to non-abelian model structures: the classical and constructive Kan-Quillen model structures on the category $\sset$ of simplicial sets and the standard projective model structure on the category $\CH$ of nonnegative chain complexes of modules over a ring $R$, and show that the weak factorization systems associated to these model structures satisfy all conditions specified in Theorem \ref{thmC}.

\subsection{Classical Kan-Quillen model structure}
Let $(\calC, \calW, \calF)$ be the Kan-Quillen model structure on $\sset$;
see \cite{Qui67}. Explicitly,
\begin{itemize}
\item \emph{cofibrations} (morphisms in $\calC$) are monomorphisms, i.e., morphisms $f: X \to Y$ in $\sset$ such that $f_k: X_k \to Y_k$ is an injection of sets for each $k\in\mathbb{N}$;
\item \emph{weak equivalences} (morphisms in $\calW$) are morphisms $f: X \to Y$ in $\sset$ whose geometric realization $|f|$ is a weak homotopy equivalence of topological spaces;
\item \emph{fibrations} (morphisms in $\calF$) are the Kan fibrations, i.e., morphisms in $\sset$ that have the right lifting property with respect to all horn inclusions.
\end{itemize}

In the following, let $\bcalF=\calF\cap\calW$ and $\bcalC=\calC\cap\calW$.

\begin{prp}\label{simplicial sets}
The weak factorization systems $(\calC, \bcalF)$ and $(\bcalC, \calF)$ satisfy all conditions specified in Theorem \ref{thmC}.
\end{prp}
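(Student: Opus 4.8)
The plan is to run through the hypotheses of Theorem~\ref{thmC} for $(\calC,\bcalF)$ and $(\bcalC,\calF)$ one at a time; all but one are formal or classical. Since $\bcalF=\calF\cap\calW$ and $\bcalC=\calC\cap\calW$ arise from the model structure $(\calC,\calW,\calF)$ on $\sset$, Proposition~\ref{mcw} already gives that these two pairs are compatible weak factorization systems. Condition~(1) of Theorem~\ref{thmC} is immediate because $\sset$, being a presheaf category, is bicomplete and hence has all pushouts and pullbacks.

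Next I would dispatch Condition~(3). The class $\calC$ consists of the monomorphisms of $\sset$, namely the levelwise injections, and for any composable $\alpha,\beta$ with $\beta\alpha$ a monomorphism one automatically gets that $\alpha$ is a monomorphism (from $\alpha u=\alpha v$ deduce $\beta\alpha u=\beta\alpha v$, hence $u=v$); so $\calC$ satisfies the left cancellation property. For $\bcalC=\calC\cap\calW$, if $\beta\alpha$ and $\beta$ lie in $\bcalC$, then $\alpha\in\calC$ by the previous remark and $\alpha\in\calW$ by the $2$-out-of-$3$ property of $\calW$ applied to $\beta\alpha,\beta\in\calW$; hence $\alpha\in\bcalC$. (This simply records that the Kan--Quillen model structure is hereditary.)

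The real content is Condition~(2): that $(\bcalC,\calF)$ has the Frobenius property, i.e.\ that $\sset$ has pullbacks along Kan fibrations --- again bicompleteness --- and that trivial cofibrations are stable under pullback along Kan fibrations. Here I would invoke the criterion recalled just before Example~\ref{exa1} (taken from \cite{VG12}): for a model structure whose cofibrations are stable under pullback, the Frobenius property of $(\calC\cap\calW,\calF)$ is equivalent to right properness. In $\sset$ the cofibrations are the monomorphisms, which are stable under pullback in any category with pullbacks, and the Kan--Quillen model structure is right proper, a classical fact. Combining these two inputs yields Condition~(2). I expect this to be the only step that is not purely formal, but even it reduces --- via the van den Berg--Garner criterion --- to classical right properness of $\sset$, so no new argument is needed; alternatively one may argue directly that anodyne extensions are stable under pullback along Kan fibrations, which is the same statement in disguise.
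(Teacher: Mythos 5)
Your proposal is correct, and for everything except the Frobenius property it coincides with the paper's argument: compatibility via the 2-out-of-3 property of $\calW$ (equivalently Proposition~\ref{mcw}), condition~(1) from bicompleteness of $\sset$, and condition~(3) from the elementary cancellation of monomorphisms combined with 2-out-of-3 for the trivial cofibrations. The one substantive step is handled differently. The paper cites Gambino--Sattler \cite[Theorem 4.8]{GS17} directly for the Frobenius property of $(\bcalC,\calF)$, whereas you route through the van den Berg--Garner criterion recalled before Example~\ref{exa1}: since cofibrations in $\sset$ are monomorphisms and hence pullback-stable, the Frobenius property of $(\calC\cap\calW,\calF)$ is equivalent to right properness, which is a classical fact about the Kan--Quillen model structure. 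Your route is legitimate and arguably more self-contained relative to the text of this paper, since the \cite{VG12} equivalence is already quoted there and right properness of $\sset$ is standard; the paper's citation of \cite{GS17} buys a proof that does not presuppose right properness (indeed that reference establishes the Frobenius condition by a direct analysis of uniform fibrations, which is also what makes the constructive variant in the next subsection go through). Your closing remark that one could instead show anodyne extensions are stable under pullback along Kan fibrations is accurate but, as you note, is the same statement restated rather than an independent argument.
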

\begin{prf*}
The compatible condition is clear since the condition (CP1) holds obviously, and the conditions (CP2) and (CP3) follow from the 2-out-of-3 property of $\calW$. The condition (1) in Theorem \ref{thmC} automatically holds since $\sset$ is bicomplete. It is clear that $\calC$ and $\bcalC$ satisfy the left cancellation property. Finally, it follows from Gambino and Sattler \cite[Theorem 4.8]{GS17} that the weak factorization system $(\bcalC, \calF)$ satisfies the Frobenius property.
\end{prf*}

\subsection{Constructive Kan-Quillen model structure}
The original proofs of the existence of the classical Kan-Quillen model structure on $\sset$ use the law of excluded middle (EM) and the axiom of choice (AC), which are not valid in constructive mathematics. Recently, a constructively valid model structure on $\sset$ was given by Henry \cite{Hen22} and Gambino, Sattler and Szumi\l{o} \cite{GSS22}, which coincides with the classical Kan-Quillen model structure once (EM) and (AC) are assumed.

For the convenience of the reader, we include some details on the constructive Kan-Quillen model structure on $\sset$; for more details, please refer to \cite{GSS22}. A map $i: A \to B$ of sets is called a {\it decidable inclusion} if there is a map $j: C \to B$ such that $i$ and $j$ exhibit $B$ as a coproduct of $A$ and $C$, that is, the diagram
\[
\xymatrix{
  \emptyset \ar[d]_{}\ar[r]^{} & C \ar[d]^{j} \\
  A \ar[r]_{i}                                                & B}
\]
is a pushout and $B \cong A \sqcup C$. The following lemma asserts that the class of decidable inclusions satisfies the left cancellation property, which is trivially holds true while assuming (EM) as in this case decidable inclusions are precisely injections.

\begin{lem} \label{LCP}
Let $\alpha: A \to B$ and $\beta: B \to C$ be two maps of sets, and suppose that both $\beta$ and $\beta \alpha$ are decidable inclusions. Then $\alpha$ is also a decidable inclusion.
\end{lem}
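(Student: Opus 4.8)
The plan is to split the property of being a decidable inclusion into its two ingredients — injectivity, and a \emph{constructive} (choice-free) decidable description of the image — and to establish both for $\alpha$, throughout working with the explicit coproduct decompositions supplied by the hypotheses rather than with any abstract notion of image. Injectivity of $\alpha$ is immediate: if $\alpha(a)=\alpha(a')$ then $\beta\alpha(a)=\beta\alpha(a')$, hence $a=a'$ since $\beta\alpha$ is injective.

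The main step is to exhibit $B$ as a coproduct of $A$ along $\alpha$ together with a suitable complement. Since $\beta\alpha$ is a decidable inclusion, I would fix a set $S$ and a map $\ell\colon S\to C$ so that $[\beta\alpha,\ell]\colon A\sqcup S\to C$ is a bijection; in particular, for every $c\in C$ one can decide whether $c$ is of the form $\beta\alpha(a)$, and if so the inverse bijection produces such an $a$. Now set $D=\{\,b\in B \mid \beta(b)\ \text{is not of the form}\ \beta\alpha(a)\,\}$; applying the decision above to $\beta(b)$ shows $D$ is a decidable subset of $B$, and I let $j\colon D\to B$ be its inclusion. I would then check that $[\alpha,j]\colon A\sqcup D\to B$ is a bijection: it is injective because $\alpha$ and $j$ are and their images are disjoint (if $\alpha(a)=d$ with $d\in D$ then $\beta(d)=\beta\alpha(a)$, so $\beta(d)$ is of the form $\beta\alpha(a')$, contradicting $d\in D$); and it is surjective because, given $b\in B$, the decision applied to $\beta(b)$ either yields an $a\in A$ with $\beta\alpha(a)=\beta(b)$, whence $\alpha(a)=b$ since $\beta$ is injective, or else witnesses that $b\in D$ and $b=j(b)$. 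Finally, a bijection $[\alpha,j]\colon A\sqcup D\to B$ is exactly the data making the square over $\emptyset$ in the definition a pushout and identifying $B$ with $A\sqcup D$, so $\alpha$ is a decidable inclusion.

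No genuine difficulty arises; the only point that demands care is constructivity. One must phrase the membership test defining $D$ through the coproduct presentation $A\sqcup S\cong C$ — so that it is an honest decision — and extract the element $a$ in the surjectivity argument from the inverse of this bijection rather than from a classical ``there exists'', since the whole purpose of the lemma is that it survives the removal of (EM) and (AC).
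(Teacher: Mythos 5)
Your proof is correct and constructively valid, but it takes a different route from the paper's. The paper argues diagrammatically: it forms the complement $B'$ of $B$ in $C$, checks via a pullback that $A\cap B'=\emptyset$, assembles the decidable subobject $A\sqcup B'\hookrightarrow C$ using \cite[Lemma 2.1.7]{GSS22}, and then recognizes $\alpha$ as the pullback of that decidable inclusion along $\beta$, citing \cite[Lemmas 2.1.1 and 2.1.4]{GSS22}. You instead work element-wise with the single decomposition $C\cong A\sqcup S$ coming from $\beta\alpha$: the coproduct presentation gives an honest decision procedure for membership in the image of $\beta\alpha$, you transport it along $\beta$ to carve out the complemented subset $D=\{b\in B\mid \beta(b)\notin\mathrm{im}(\beta\alpha)\}$, and you verify directly that $[\alpha,j]\colon A\sqcup D\to B$ is a bijection. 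In categorical language your argument is exactly ``pull back the decidable inclusion $\beta\alpha$ along $\beta$, and use that $\beta$ is monic to identify this pullback with $\alpha$'' --- which makes visible that your proof only uses monicity of $\beta$, not the full hypothesis that $\beta$ is a decidable inclusion (monicity does follow from that hypothesis, so this is a harmless strengthening). What each approach buys: the paper's version stays entirely inside the lemma toolkit of \cite{GSS22} and never mentions elements or decision procedures, which fits its pullback/pushout formalism; yours is more elementary and shorter in spirit, at the cost of having to be explicit (as you are) that the case split defining $D$ and the witness $a$ in the surjectivity step are extracted from the inverse of the coproduct bijection rather than from an appeal to excluded middle or choice. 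All the constructive points you flag --- decidability of the summand an element of $A\sqcup S$ lies in, disjointness of the images, and ex falso in the mixed case of the injectivity check --- go through.
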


\begin{proof}
By definition, we have the following two pushout diagrams,
which are also pullback diagrams by \cite[Lemma 2.1.1]{GSS22}:
\[
\xymatrix{
\emptyset \ar[r] \ar[d] & A' \ar[d]\\
A \ar[r]^-{\beta \alpha} & C \cong A \sqcup A'
}
\quad
\xymatrix{
\emptyset \ar[r] \ar[d] & B' \ar[d]^-{\beta'}\\
B \ar[r]^-{\beta} & C \cong B \sqcup B'.
}
\]
Note that the map $\beta': B' \to C$ is also a decidable inclusion.

Now consider the intersection $A \cap B'$, which is given by the pullback diagram
\[
\xymatrix{
A \cap B' \ar[r] \ar[d] & B' \ar[d]\\
A \ar[r]^-{\beta \alpha} & C.
}
\]
We then have a commutative diagram
\[
\xymatrix{
 & A \cap B' \ar[dl] \ar@{-->}[d] \ar[dr]\\
 A \ar[dr]^-{\alpha} & \emptyset \ar[r] \ar[d] & B' \ar[d]^-{\beta'}\\
 & B \ar[r]^{\beta} & C,
}
\]
where the right square is a pullback diagram.
Consequently, $A \cap B' = \emptyset$ as there is a map from the intersection to the empty set. Thus, the union $A \cup B'$, given by the following pushout diagram, is actually a coproduct.
\[
\xymatrix{
A \cap B' = \emptyset \ar[r] \ar[d] & B' \ar[d]\\
A \ar[r] & A \cup B' \cong A \sqcup B'
}
\]
By \cite[Lemma 2.1.7]{GSS22}, the map $A \sqcup B' \to C$ is again a decidable inclusion. Consider the following diagram
\[
\xymatrix{
A \ar[r] \ar[d]_-{\alpha} & A \sqcup B' \ar[d]\\
B \ar[r] & C \cong B \sqcup B',
}
\]
which is clearly a pullback diagram. Therefore, it follows from \cite[Lemma 2.1.4]{GSS22} that $\alpha$ is a decidable inclusion.
\end{proof}

Let $I$ (resp., $J$) be the class of boundary inclusions (resp., horn inclusions) in $\sset$. A morphism in $\sset$ is a {\it trivial fibration} (resp., {\it Kan fibration}) if it has the right lifting property with respect to morphisms in $I$ (resp., $J$). A morphism in $\sset$ is a {\it trivial cofibration} (resp., {\it cofibration}) if it has the left lifting property with respect to Kan fibrations (resp., trivial fibrations). A simplicial set $X$ is called {\it cofibrant} if the morphism $\emptyset\to X$ is a cofibration. Let $\sset_{\mathrm{cof}}$ denote the full subcategory of $\sset$ consisting of cofibrant simplicial sets, and set
\begin{itemize}
\item $\calC_{\mathrm{cof}}$: the class of morphisms in $\sset_{\mathrm{cof}}$ that are cofibrations;
\item $\bcalC_{\mathrm{cof}}$: the class of morphisms in $\sset_{\mathrm{cof}}$ that are trivial cofibrations;
\item $\calF_{\mathrm{cof}}$: the class of morphisms in $\sset_{\mathrm{cof}}$ that are Kan fibrations;
\item $\bcalF_{\mathrm{cof}}$: the class of morphisms in $\sset_{\mathrm{cof}}$ that are trivial fibrations;
\item $\calW_{\mathrm{cof}}$: the class of morphisms in $\sset_{\mathrm{cof}}$ that are weak homotopy equivalences in the sense of \cite[3.1]{GSS22}.
\end{itemize}

It follows from \cite[Proposition 2.2.7]{GSS22} that the pairs $(\calC_{\mathrm{cof}}, \bcalF_{\mathrm{cof}})$ and $(\bcalC_{\mathrm{cof}}, \calF_{\mathrm{cof}})$ are weak factorization systems in $\sset_{\mathrm{cof}}$, and furthermore, $\bcalF_{\mathrm{cof}} = \calF_{\mathrm{cof}} \cap \calW_{\mathrm{cof}}$ and $\bcalC_{\mathrm{cof}} = \calC_{\mathrm{cof}} \cap \calW_{\mathrm{cof}}$ by \cite[Propositions 3.6.3 and 3.6.4]{GSS22}. We have the next result.

\begin{prp}\label{cons simplicial sets}
The weak factorization systems $(\calC_{\mathrm{cof}}, \bcalF_{\mathrm{cof}})$ and $(\bcalC_{\mathrm{cof}}, \calF_{\mathrm{cof}})$ satisfy all conditions specified in Theorem \ref{thmC}.
\end{prp}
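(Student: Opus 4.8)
The plan is to follow the same route as the proof of Proposition \ref{simplicial sets} for the classical Kan--Quillen structure: first verify that $(\calC_{\mathrm{cof}}, \bcalF_{\mathrm{cof}})$ and $(\bcalC_{\mathrm{cof}}, \calF_{\mathrm{cof}})$ are compatible, and then check in turn the three numbered conditions of Theorem \ref{thmC}. Two features distinguish this from the classical case and dictate where the work lies: the category $\sset_{\mathrm{cof}}$ is not bicomplete, so condition (1) must be checked directly rather than being automatic; and constructively the cofibrations are no longer all monomorphisms, which is exactly why Lemma \ref{LCP} is needed for the left cancellation property in condition (3).

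For compatibility I would argue just as in Propositions \ref{mcw} and \ref{simplicial sets}: (CP1) is immediate because every trivial cofibration in $\sset_{\mathrm{cof}}$ is a cofibration, and, using the equalities $\bcalF_{\mathrm{cof}} = \calF_{\mathrm{cof}} \cap \calW_{\mathrm{cof}}$ and $\bcalC_{\mathrm{cof}} = \calC_{\mathrm{cof}} \cap \calW_{\mathrm{cof}}$ recalled before the statement, the conditions (CP2) and (CP3) reduce to the 2-out-of-3 property of $\calW_{\mathrm{cof}}$, which is part of the constructive Kan--Quillen model structure established in \cite{GSS22}. For condition (1), since $\sset$ is bicomplete it suffices to check that the relevant pushouts and pullbacks, formed in $\sset$, land in the full subcategory $\sset_{\mathrm{cof}}$. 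Given a cofibration $A \to B$ between cofibrant objects and any morphism $A \to C$ with $C$ cofibrant, the pushout morphism $C \to B \sqcup_A C$ is again a cofibration, so $\emptyset \to C \to B \sqcup_A C$ is a composite of cofibrations (Remark \ref{rmk1.3}) and $B \sqcup_A C$ is cofibrant; for pullbacks along Kan fibrations I would invoke the fact, part of the constructive development in \cite{GSS22}, that cofibrant simplicial sets are closed under pullback along Kan fibrations between cofibrant simplicial sets.

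Condition (2), the Frobenius property for $(\bcalC_{\mathrm{cof}}, \calF_{\mathrm{cof}})$, then reduces to the existence of those pullbacks together with the stability of trivial cofibrations under pullback along Kan fibrations; for the latter I would appeal to the constructive theory of uniform Kan fibrations, i.e.\ to the constructively valid Frobenius and right-properness results of Gambino and Sattler \cite[Theorem 4.8]{GS17} (see also \cite{GSS22}), noting that by condition (1) the constructions involved do not leave $\sset_{\mathrm{cof}}$. I expect this to be the main obstacle, in the sense that it is the only step that genuinely rests on a deep external input rather than on formal manipulation of weak factorization systems.

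Finally, for condition (3) I would use that a morphism between cofibrant simplicial sets is a cofibration if and only if it is degreewise a decidable inclusion (i.e.\ each component $f_k$ is a decidable inclusion of sets), a characterization to be extracted from the description of cofibrations in \cite{GSS22}. Then, for $\alpha\colon A \to B$ and $\beta\colon B \to C$ in $\sset_{\mathrm{cof}}$ with $\beta$ and $\beta\alpha$ cofibrations, $\alpha$ is a monomorphism since $\beta\alpha$ is, and in each degree $\beta_k$ and $(\beta\alpha)_k = \beta_k\alpha_k$ are decidable inclusions, so $\alpha_k$ is a decidable inclusion by Lemma \ref{LCP}; hence $\alpha$ is a cofibration, and $\calC_{\mathrm{cof}}$ satisfies the left cancellation property. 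Since $\bcalC_{\mathrm{cof}} = \calC_{\mathrm{cof}} \cap \calW_{\mathrm{cof}}$, the left cancellation property for $\bcalC_{\mathrm{cof}}$ follows by combining this with the 2-out-of-3 property of $\calW_{\mathrm{cof}}$. Apart from condition (2), the only point that requires care is confirming this degreewise characterization of cofibrations between cofibrant objects so that Lemma \ref{LCP} applies exactly as stated.
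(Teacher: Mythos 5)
Your proposal is correct and follows essentially the same route as the paper: compatibility via the 2-out-of-3 property of $\calW_{\mathrm{cof}}$, left cancellation via the degreewise characterization of cofibrations as decidable inclusions (the paper's \cite[Corollary 2.4.6]{GSS22}) combined with Lemma \ref{LCP}, and the Frobenius property by appeal to the constructive literature (the paper cites \cite[Proposition 4.1.6]{GSS22} rather than \cite[Theorem 4.8]{GS17}, which is the reference for the classical case). Your treatment of condition (1) is in fact more careful than the paper's, which simply asserts that it holds automatically.
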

\begin{prf*}
The compatible condition is clear since the condition (CP1) holds obviously, and the conditions (CP2) and (CP3) follow from the 2-out-of-3 property of $\calW_{\mathrm{cof}}$; see \cite[Lemma 3.1.5]{GSS22}. The condition (1) in Theorem \ref{thmC} automatically holds.

Now we check the left cancellation property. Note that if $\calC_{\mathrm{cof}}$ has this property, then so does $\bcalC_{\mathrm{cof}}$. Therefore, we only need to show that $\calC_{\mathrm{cof}}$ has left cancellation property. Let $f: X \to Y$ and $g: Y \to Z$ be two morphisms in $\sset_{\mathrm{cof}}$ such that both $g$ and $gf$ are cofibrations. By \cite[Corollary 2.4.6]{GSS22}, for each $k \in \mathbb{N}$, both $g_k: Y_k \to Z_k$ and $(gf)_k = g_kf_k: X_k \to Z_k$ are decidable inclusions, so it follows from Lemma \ref{LCP} that each $f_k: X_k \to Y_k$ is also a decidable inclusion. Applying \cite[Corollary 2.4.6]{GSS22} again, we conclude that $f: X \to Y$ is a cofibration.

Finally, it follows from  \cite[Proposition 4.1.6]{GSS22} that the weak factorization system $(\bcalC_{\mathrm{cof}}, \calF_{\mathrm{cof}})$ satisfies the Frobenius property.
\end{prf*}

\subsection{Projective model structure}
Throughout this subsection, all $R$-modules are left $R$-modules. An object $X =\{X_k\}_{k \geqslant 0}$ in $\CH$ is called \emph{acyclic} if the homology groups $\h_k(X)$ vanish for all $k \geqslant 0$. Let $(\calC, \calW, \calF)$ be the standard projective model structure on $\CH$;
see Dwyer and Spali\'{n}ski \cite{DS95model}. Explicitly,
\begin{itemize}
\item \emph{cofibrations} (morphisms in $\calC$) are morphisms $f : X \to Y$ such that $f_k : X_k \to Y_k$ is a monomorphism whose cokernel is a projective $R$-module for each $k \geqslant 0$;

\item \emph{weak equivalences} (morphisms in $\calW$) are morphisms $f: X \to Y$ which induces isomorphisms $\h_k(X) \cong \h_k(Y)$ for all $k \geqslant 0$;

\item \emph{fibrations} (morphisms in $\calF$) are morphisms $f: X \to Y$ such that $f_k : X_k \to Y_k$ is an epimorphism for each $k > 0$.
\end{itemize}
Although $\CH$ is a bicomplete abelian category, this model structure is not abelian since fibrations are required to be epic only in positive degrees.

In the following, let $\bcalF=\calF\cap\calW$ and $\bcalC=\calC\cap\calW$.

\begin{prp}\label{establish wfs}
The weak factorization systems $(\calC, \bcalF)$ and $(\bcalC, \calF)$ satisfy all conditions specified in Theorem \ref{thmC}.
\end{prp}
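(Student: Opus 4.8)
The plan is to verify, one at a time, the four requirements that Theorem~\ref{thmC} imposes on a pair of compatible weak factorization systems — compatibility, condition~(1), condition~(3), condition~(2) — only the Frobenius property~(2) requiring real work. Compatibility and condition~(1) are formal: $\CH$ is a bicomplete abelian category, so it has all pushouts and pullbacks (giving~(1)); (CP1) is the obvious inclusion $\bcalC = \calC\cap\calW\subseteq\calC$; and (CP2), (CP3) follow from the $2$-out-of-$3$ property of $\calW$ exactly as in the proof of Proposition~\ref{mcw}.

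For condition~(3) I would first treat $\calC$. Note that $\calC = \Mon(\mathcal{P})$, where $\mathcal{P}$ is the class of complexes all of whose terms are projective (a morphism of complexes is a monomorphism in $\CH$ precisely when it is one in each degree, and its cokernel is computed degreewise). The class $\mathcal{P}$ is closed under kernels of epimorphisms: if $0\to A\to B\to C\to 0$ is exact in $\CH$ with $B,C\in\mathcal{P}$, then each $0\to A_k\to B_k\to C_k\to 0$ splits since $C_k$ is projective, so each $A_k$ is a summand of the projective $B_k$. Hence $\calC = \Mon(\mathcal{P})$ has the left cancellation property by Lemma~\ref{resolving} (concretely: if $g$ and $gf$ are cofibrations then each $f_k$ is a monomorphism, and the exact sequence $0\to\Coker(f_k)\to\Coker(g_kf_k)\to\Coker(g_k)\to 0$ splits, so $\Coker(f_k)$ is projective). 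Then $\bcalC$ inherits the left cancellation property, since $\bcalC = \calC\cap\calW$ and $\calW$ has the $2$-out-of-$3$ property: if $g,gf\in\bcalC$ then $f\in\calC$ and $f\in\calW$, so $f\in\bcalC$.

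The substance of the proof is condition~(2), that $(\bcalC,\calF)$ has the Frobenius property; pullbacks along fibrations exist since $\CH$ is bicomplete, so I must show that trivial cofibrations are stable under pullback along fibrations. Let $\alpha\colon X\to Z$ lie in $\bcalC$ and $\beta\colon Y\to Z$ be a fibration, and form the pullback $P = X\times_Z Y$ with projections $p\colon P\to X$ and $q\colon P\to Y$. Since $\alpha$ is a monomorphism in each degree, so is $q$, giving a short exact sequence $0\to P\xra{q}Y\to\Coker(q)\to 0$ of complexes. The key step is to identify $\Coker(q)$: writing $\pi\colon Z\to D := \Coker(\alpha)$ for the quotient chain map and $\bar\beta := \pi\beta\colon Y\to D$, one checks degreewise that $\ker(\bar\beta) = \im(q)$, whence $\Coker(q)\cong\im(\bar\beta)\subseteq D$. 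Now $\alpha\in\bcalC$ means $D$ is a complex of projectives, and since $\alpha$ is degreewise mono and a homology isomorphism while $0\to X\to Z\to D\to 0$ is short exact, $D$ is acyclic; in particular $D_0 = d(D_1)$. Because $\beta$ is a fibration, $\beta_k$ is an epimorphism for $k\geq 1$, so $\bar\beta_k$ is an epimorphism for $k\geq 1$ and $\im(\bar\beta)$ agrees with $D$ in all positive degrees; and in degree $0$ one gets $D_0 = d(D_1) = d(\bar\beta_1(Y_1)) = \bar\beta_0(d(Y_1))\subseteq\im(\bar\beta_0)\subseteq D_0$, so $\im(\bar\beta_0) = D_0$ as well. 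Therefore $\Coker(q)\cong D$. Consequently $q$ is a monomorphism whose cokernel is a complex of projectives — so $q$ is a cofibration — and the short exact sequence $0\to P\to Y\to D\to 0$ with $D$ acyclic forces $q$ to be a homology isomorphism; hence $q\in\calC\cap\calW = \bcalC$, as required.

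I expect condition~(2) to be the only real obstacle, and within it the degree-zero bookkeeping. A priori $\Coker(q_0)$ is merely a submodule of the projective $D_0$, which need not be projective; this is precisely why the Frobenius property fails for pullbacks of arbitrary cofibrations (or of monomorphisms along arbitrary epimorphisms, cf.\ Proposition~\ref{Frobenius property}). It is the interplay of the two hypotheses — $\alpha$ being a weak equivalence (so $D$ is acyclic, giving $D_0 = d(D_1)$) and $\beta$ being a fibration (so $\bar\beta_1$ is surjective, giving $d(D_1)\subseteq\im(\bar\beta_0)$) — that repairs degree $0$, and getting this interplay right is the heart of the argument.
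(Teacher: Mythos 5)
Your proposal is correct and follows essentially the same route as the paper's proof: the compatibility, bicompleteness, and left-cancellation (Snake Lemma) steps are identical, and your Frobenius argument — identifying $\Coker(q)$ with $\im(\bar\beta)\subseteq\Coker(\alpha)$, using surjectivity of the fibration in positive degrees and acyclicity of $\Coker(\alpha)$ to repair degree $0$ — is the same computation the paper performs by showing the induced map $r\colon\Coker(p)\to\Coker(f)$ is an isomorphism. The degree-zero interplay you single out ($D_0=d(D_1)$ plus surjectivity of $\bar\beta_1$) is exactly the paper's argument that $r_0$ is epic, with injectivity of $r_0$ coming from the pullback property just as in your first-isomorphism-theorem identification.
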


\begin{prf*}
The compatible condition is clear since the condition (CP1) holds obviously, and the conditions (CP2) and (CP3) follow from  the 2-out-of-3 property of $\calW$. The condition (1) in Theorem \ref{thmC} automatically holds since $\CH$ is bicomplete.

Now we check the left cancellation property. Note that if $\calC$ has this property, then so does $\bcalC$. Therefore, we only need to show that $\calC$ has left cancellation property. Given morphisms $f: X\to Y$ and $g: Y\to Z$ such that $gf \in \calC$ and $g \in \calC$, we want to show that $f$ belongs to $\calC$ as well, that is, for each $k \geqslant 0$, $f_k : X_k \to Y_k$ is a monomorphism (which holds trivially since $g_kf_k = (gf)_k$ is a monomorphism) such that its cokernel is a projective $R$-module. This is also clear. Indeed, applying the Snake Lemma to the commutative diagram
\[
\xymatrix{
0 \ar[r] & X_k \ar@{=}[r] \ar[d]^-{f_k} & X_k \ar[r] \ar[d]^-{g_kf_k} & 0 \ar[r] \ar[d] & 0 \\
0 \ar[r] & Y_k \ar[r]^-{g_k} & Z_k \ar[r] & \Coker(g_k) \ar[r] & 0
}
\]
of short exact sequences, we obtain a short exact sequence
\[
0 \to \Coker(f_k) \to \Coker(g_kf_k) \to \Coker(g_k) \to 0
\]
of $R$-modules. Since both $\Coker(g_kf_k)$ and $\Coker(g_k)$ are projective, so is $\Coker(f_k)$.

Finally, we verify the Frobenius property of $(\bcalC, \calF)$. Given a pullback diagram
\[
\xymatrix{
M \ar@{}[rd]|<<{\ulcorner} \ar[d]_{} \ar[r]^{p} & N \ar[d]^{g} \\
X \ar[r]^{f}                                     & Y}
\]
in $\CH$ with $f \in \bcalC$ and $g \in \calF$. We want to show that $p$ belongs to $\bcalC$. Since $p$ is monic as $f$ is so, it remains to show that $\Coker (p)$ is acyclic and each $\Coker (p_k)$ is a projective $R$-module for $k \geqslant 0$. We do this by proving that the morphism $r$ in the commutative diagram
\[
\xymatrix{
0 \ar[r] & M \ar[r]^-{p} \ar[d]_-{} & N \ar[r] \ar[d]^-{g} & \Coker (p) \ar[r] \ar[d]^-{r} & 0 \\
0 \ar[r] & X \ar[r]^-{f}             & Y \ar[r]            & \Coker (f) \ar[r] & 0}
\]
of short exact sequences is an isomorphism. Indeed, in this case $\Coker (p) \cong \Coker(f)$ and $\Coker(f)$ has the desired property since $f$ is contained in $\bcalC$.

For $k \geqslant 1$, the left square in the commutative diagram
\[
\xymatrix{
0 \ar[r] & M_k \ar[r]^-{p_k} \ar[d]_-{} & N_k \ar[r] \ar[d]^-{g_k} & \Coker(p_k) \ar[r] \ar[d]^-{r_k} & 0 \\
0 \ar[r] & X_k \ar[r]^-{f_k}             & Y_k \ar[r]            & \Coker(f_k) \ar[r] & 0}
\]
of short exact sequences is both a pullback diagram and a pushout diagram since $g_k$ is an epimorphism. Thus, $r_k$ is an isomorphism for $k \geqslant 1$. When $k = 0$, consider the following commutative diagram of $R$-modules with exact rows:
\[
\xymatrix@R=0.7cm@C=0.5cm{
0 \ar[rr] && M_1 \ar[rr]^(0.33){\ \ \ \ \ p_1}\ar[dr] \ar'[d]_-{}[dd]&& N_1 \ar[rr]\ar[dr] \ar'[d][dd]
&& \Coker(p_1)  \ar[rr]\ar'[d][dd] \ar[dr] && 0 \\
& 0 \ar[rr] && M_0 \ar[rr]^(0.33){p_0} \ar[dd]&& N_0 \ar[rr]\ar[dd] && \Coker(p_0)  \ar[rr] \ar[dd]_(0.33){r_0} && 0 \\
0 \ar[rr] && X_1 \ar'[r]^(0.66){f_1}[rr] \ar[dr]&& Y_1 \ar'[r]^(0.5){}[rr] \ar[dr]&& \Coker(f_1)  \ar[rr] \ar[dr]&& 0 \\
& 0 \ar[rr] && X_0 \ar[rr]^(0.33){\ \ \ \ \ f_0} && Y_0 \ar[rr] && \Coker(f_0) \ar[rr] && 0.
}\]
Note that $\Coker (f)$ is acyclic. The morphism $\Coker(f_1) \to \Coker(f_0)$ is an epimorphism, so $r_0$ is also epic as $r_1$ is an isomorphism. On the other hand, since
\[
\xymatrix{
 M_0 \ar[r]^-{p_0} \ar[d]_-{} & N_0  \ar[d]^-{g_0}   \\
 X_0 \ar[r]^-{f_0}               & Y_0             }
\]
is a pullback diagram, it follows that $r_0$ is a monomorphism as well, and hence, an isomorphism. Thus, $r$ is indeed an isomorphism, and our proof is complete.
\end{prf*}

\section*{Acknowledgment}
\noindent
We thank James Gillespie for kindly answering our questions and providing many valuable suggestions, and we extend our gratitude to the referee for pertinent comments that improved the presentation at several points and led to Examples \ref{exa1} and \ref{exa2}.

\bibliographystyle{amsplain-nodash}


\begin{thebibliography}{10}

\bibitem{Be14}
Hanno Becker, \emph{Models for singularity categories}, Adv. Math. \textbf{254}
  (2014), 187--232. \MR{MR3161097}

\bibitem{Bo77}
Aldridge~K. Bousfield, \emph{Constructions of factorization systems in
  categories}, J. Pure Appl. Algebra \textbf{9} (1976/77), no.~2, 207--220.
  \MR{MR478159}

\bibitem{CGT}
Maria~M. Clementino, Eraldo Giuli, and Walter Tholen, \emph{Topology in a
  category: compactness}, Portugal. Math. \textbf{53} (1996), no.~4, 397--433.
  \MR{MR1432147}

\bibitem{DS95model}
William~G. Dwyer and Jan Spali\'{n}ski, \emph{Homotopy theories and model
  categories}, Handbook of algebraic topology, North-Holland, Amsterdam, 1995,
  pp.~73--126. \MR{MR1361887}

\bibitem{rha}
Edgar~E. Enochs and Overtoun M.~G. Jenda, \emph{Relative homological algebra},
  de Gruyter Expositions in Mathematics, vol.~30, Walter de Gruyter \& Co.,
  Berlin, 2000. \MR{MR1753146}

\bibitem{GHS22}
Nicola Gambino, Simon Henry, Christian Sattler, and Karol Szumi{\l}o, \emph{The
  effective model structure and {$\infty $}-groupoid objects}, Forum Math.
  Sigma \textbf{10} (2022), Paper No. e34, 1--59. \MR{MR4436592}

\bibitem{GS17}
Nicola Gambino and Christian Sattler, \emph{The {F}robenius condition, right
  properness, and uniform fibrations}, J. Pure Appl. Algebra \textbf{221}
  (2017), no.~12, 3027--3068. \MR{3666736}

\bibitem{GSS22}
Nicola Gambino, Christian Sattler, and Karol Szumi\l{o}, \emph{The constructive
  {K}an-{Q}uillen model structure: two new proofs}, Q. J. Math. \textbf{73}
  (2022), no.~4, 1307--1373. \MR{MR4520221}

\bibitem{G15}
James Gillespie, \emph{How to construct a {H}ovey triple from two cotorsion
  pairs}, Fund. Math. \textbf{230} (2015), no.~3, 281--289. \MR{MR3351474}

\bibitem{Hen22}
Simon Henry, \emph{A constructive account of the Kan-Quillen model structure
  and of Kan's $\mathrm{Ex}^{\infty}$ functor}, preprint
  \textbf{\arxiv[CT]{1905.06160}}.

\bibitem{Ho02}
Mark Hovey, \emph{Cotorsion pairs, model category structures, and
  representation theory}, Math. Z. \textbf{241} (2002), no.~3, 553--592.
  \MR{MR1938704}

\bibitem{Jo08}
Andr\'{e} Joyal, \emph{The theory of quasi-categories and its applications},
  Quaderns, vol.~45, Centre de Recerca Matem\`{a}tica, 2008.

\bibitem{Law}
F.~William Lawvere, \emph{Equality in hyperdoctrines and comprehension schema
  as an adjoint functor}, Applications of {C}ategorical {A}lgebra ({P}roc.
  {S}ympos. {P}ure {M}ath., {V}ol. {XVII}, {N}ew {Y}ork, 1968), Proc. Sympos.
  Pure Math., XVII, Amer. Math. Soc., Providence, RI, 1970, pp.~1--14.
  \MR{MR257175}

\bibitem{PS22}
Leonid Positselski and Jan \v{S}\v{t}ov\'{\i}\v{c}ek, \emph{Derived, coderived,
  and contraderived categories of locally presentable abelian categories}, J.
  Pure Appl. Algebra \textbf{226} (2022), no.~4, Paper No. 106883, 39.
  \MR{MR4310048}

\bibitem{Qui67}
Daniel~G. Quillen, \emph{Homotopical algebra}, Lecture Notes in Mathematics,
  No. 43, Springer-Verlag, Berlin-New York, 1967. \MR{MR0223432}

\bibitem{Quillen69}
Daniel~G. Quillen, \emph{Rational homotopy theory}, Ann. of Math. (2) \textbf{90} (1969), 205--295. \MR{MR258031}

\bibitem{Rezk02}
Charles Rezk, \emph{Every homotopy theory of simplicial algebras admits a proper
model}, Topology Appl. \textbf{119} (2002), 65--94. \MR{MR1881711}

\bibitem{sa17}
Christian Sattler, \emph{The equivalence extension property and model
  structures}, preprint \textbf{\arxiv[CT]{1704.06911}}.

\bibitem{VG12}
Benno van~den Berg and Richard Garner, \emph{Topological and simplicial models
  of identity types}, ACM Trans. Comput. Log. \textbf{13} (2012), no.~1, Art.
  3, 44. \MR{MR2893018}

\end{thebibliography}

\def\cprime{$'$}
  \providecommand{\arxiv}[2][AC]{\mbox{\href{http://arxiv.org/abs/#2}{\sf
  arXiv:#2 [math.#1]}}}
  \providecommand{\oldarxiv}[2][AC]{\mbox{\href{http://arxiv.org/abs/math/#2}{\sf
  arXiv:math/#2
  [math.#1]}}}\providecommand{\MR}[1]{\mbox{\href{http://www.ams.org/mathscinet-getitem?mr=#1}{#1}}}
  \renewcommand{\MR}[1]{\mbox{\href{http://www.ams.org/mathscinet-getitem?mr=#1}{#1}}}
\providecommand{\bysame}{\leavevmode\hbox to3em{\hrulefill}\thinspace}
\providecommand{\MR}{\relax\ifhmode\unskip\space\fi MR }
\providecommand{\MRhref}[2]{%
  \href{http://www.ams.org/mathscinet-getitem?mr=#1}{#2}
}
\providecommand{\href}[2]{#2}

\end{document}